\documentclass[a4paper,11pt]{amsart}
\usepackage{tikz,tikz-cd,amscd,amsmath, amsthm,amssymb,braket,mathrsfs,graphicx,verbatim,mathtools}
\usepackage{todonotes}
\usepackage{hyperref}
\hypersetup{colorlinks=true, linkcolor=black,citecolor=black}
\usepackage{subcaption}
\usepackage[margin=2.5cm]{geometry}

\usetikzlibrary{matrix,arrows,decorations.pathmorphing}
\usepackage[all]{xy}
\newtheorem{theorem}{Theorem}[section]
\theoremstyle{definition}\newtheorem*{definition}{Definition}
\theoremstyle{definition} \newtheorem{proposition}[theorem]{Proposition}
\theoremstyle{definition}\newtheorem{lemma}[theorem]{Lemma}
\theoremstyle{definition}\newtheorem{corollary}[theorem]{Corollary}
\theoremstyle{definition}
\theoremstyle{remark}\newtheorem*{remark}{Remark}
\usepackage{xcolor}
\definecolor{steel}{HTML}{396a93}

\newcommand{\IM}{\imm^{1}(\S,\R^2)}

\def\phat{\hat{\partial} }

\def\R{\mathbb{R}}

\def\S{\mathbb{S}}

\def\x{\mathsf{x}}

\def\h1{{H^1}}
\def\l2{{L^2}}
\def\G{\mathcal G^\lambda}

\newcommand{\norm}[1]{{\left \Vert {#1}\right \Vert}}
\newcommand{\enorm}[1]{{\left | {#1}\right |}}
\newcommand{\ip}[1]{{\left \langle {#1} \right \rangle}}

\newcommand{\abs}[1]{\left | {#1}\right |}

\DeclareMathOperator{\imm}{Imm}

\DeclareMathOperator{\sgn}{sgn}

\def\length{\mathcal L}
\renewcommand{\L}{\length}

\def\hpartial{\hat\partial}
\def\wimm{W^{1,1}_{\imm}(\S,\R^2)}
\def\wsr{W^{1,1}(\S,\R^2)}

\begin{document}
\title{Homogeneous Sobolev gradient flow of the length functional}

\author[P. Schrader]{Philip Schrader}
\address{School of Mathematics and Statistics, Chemistry and Physics\\ Murdoch University\\
South Street, Murdoch, WA $6150$\\ Australia}
\email{phil.schrader@murdoch.edu.au}

\author[G. Wheeler]{Glen Wheeler}
\address{Institute for Mathematics and its Applications, School of Mathematics and Applied Statistics\\
University of Wollongong\\
Northfields Ave, Wollongong, NSW $2500$\\
Australia}
\email{glenw@uow.edu.au}

\author[V.-M. Wheeler]{Valentina-Mira Wheeler}
\address{Institute for Mathematics and its Applications, School of Mathematics and Applied Statistics\\
University of Wollongong\\
Northfields Ave, Wollongong, NSW $2500$\\
Australia}
\email{vwheeler@uow.edu.au}

\thanks{
This research was supported in part by
projects FT250100880, DP250101080 and DE190100379 of the Australian Research
Council.}

\subjclass{53C44}

\begin{abstract}
The well-known curve shortening flow can be formulated as the gradient flow of the length functional on the space of immersed closed planar curves, where the gradient is taken with respect to a reparametrisation-invariant $L^2$ Riemannian metric. This metric is degenerate, giving a geodesic distance of zero between any two curves. We instead consider a family of Sobolev $H^1$ metrics depending on two parameters $\lambda>0$ and $a\in \R$, where $\lambda$ sets the weight of the first-derivative term, and $a$ indexes a length normalisation which ensures that the metric is scale-homogeneous. For each such metric, the gradient of length can be written explicitly in terms of a convolution with respect to normalised arc length against the periodic Green's function of $(\lambda^2 \partial_x^2-1)$. The associated evolution is a reparametrisation invariant nonlocal ODE whose right-hand side is well-defined even on curves that are not immersed. Working in the optimal low-regularity setting $W^{1,1}(\S,\R^2)$, we prove local well-posedness using the Picard--Lindel\"of theorem and convergence to constant maps in finite time when $a<2$, and as $t\to\infty$ when $a\geq 2$. This behaviour is exhibited by round circles, which evolve self-similarly and collapse at an explicit time. We further prove that if the initial curve is an immersion, $C^1$, $C^2$, or bounds a strictly convex set, then each of these properties is preserved along the flow. 

\end{abstract}

\maketitle

\section{Introduction}
Let $\imm^{1}(\S,\R^2)$ denote the space of $C^1$ immersions of the circle into the Euclidean plane, i.e. continuously differentiable maps $\gamma:\S\to \R^2$ with $|\gamma'|\ne0$, where $\S=[0,1]/{\sim}$ . The length functional:
\begin{equation*}
\length(\gamma):=\int_0^1 \enorm{\gamma'(u)}du \end{equation*}
is differentiable on $\imm^{1}(\S,\R^2)$ with the derivative of $\length$ at $\gamma\in \IM$ in the direction of a variation $v: \S^1 \to \R^2$ given by
\begin{equation}
\begin{split}
D\length(\gamma) v = {\left .  \partial_\varepsilon \length(\gamma+\varepsilon v) \right |}_{\varepsilon=0}
 \label{dlength}
=\int_0^1 \frac{\langle\gamma'(u),v'(u)\rangle}{\abs{\gamma'(u)}} du, 
\end{split}
\end{equation}
where the inner product on $\R^2$ is the Euclidean one.

A gradient direction $\nabla \length(\gamma)$ for the length functional at $\gamma$ is determined by a choice of inner product or Riemannian metric $\ip{\cdot,\cdot}_\gamma$ on $\IM $ according to $ D\L_\gamma=\ip{\nabla\length(\gamma), \cdot}_\gamma$. For example, in the parametrisation invariant $L^2$ metric defined by
\[
\ip{v,w}_{L^2(\gamma)}:=\int_{\S} \ip{v(u),w(u)}\, |\gamma'(u)| du,
\]
after reparametrisation and integration by parts in \eqref{dlength} the gradient is $-kN$, where $k$ is the curvature scalar and $N$ the normal vector along $\gamma$. The corresponding gradient flow is the well-known curve shortening flow (see the classical papers \cite{GH},\cite{Gage:1983aa},\cite{Gray}, and the more recent book \cite{Andrews:2020aa} Chapters 2-3).

Motivated by results of Michor and Mumford showing that the Riemannian distance in the $L^2(\gamma)$ metric is always zero \cite{MR2201275}, and that Sobolev $H^1$ type metrics do not have this pathology, the authors previously studied \cite{schrader2023h} the gradient flow of $\length$ with respect to the metric 
\[{\ip{v,w}_{L^2(\gamma)}+\ip{\tfrac{1}{\abs{\gamma'}} v',\tfrac{1}{\abs{\gamma'}} w'} _{L^2(\gamma)}}.\] 
This flow exhibits scale dependent behaviour because the two summands in the metric have different scaling under dilations of the plane. 

In this paper we consider instead the gradients of $\length$ corresponding to the Riemannian metrics:
\begin{align}\label{metrics}
\ip{v,w}_{H^1_{\lambda,a}(\gamma)}&:=\frac{1}{\length(\gamma)^{a}}\ip{v,w}_{L^2(\gamma)}+\frac{\lambda^2}{\length(\gamma)^{a-2}} \ip{\tfrac{1}{\abs{\gamma'}} v',\tfrac{1}{\abs{\gamma'}} w'} _{L^2(\gamma)}, 
\end{align}
where $\lambda > 0, a\in\R$ are parameters, and the powers of $\length$ ensure that both summands have the same scaling. Indeed if we dilate by a factor of $\alpha$  we find that 
\[
\ip{\alpha v, \alpha w}_{H^1_{\lambda,a}(\alpha \gamma)}=\alpha^{3-a}\ip{v,w }_{H^1_{\lambda,a}(\gamma)}
\]
and so these metrics are $(3-a)$-homogeneous. Two cases are particularly natural: if $a=3$ then the induced map $\alpha:\IM \to \IM$ is an isometry, and if $a=1$ the induced Riemannian distance scales by the same factor $\alpha$. The former is most interesting from the point of view of shape analysis
(see e.g.~\cite{bauer2024elastic},\cite{bauer2011sobolev},\cite{MR2201275}) , while the latter seems very natural from a geometric point of view.

In Lemma \ref{gradientlemma} we derive an explicit formula \eqref{thegradient} for the gradients of $\L$ with respect to the metrics \eqref{metrics}. The kernel $\mathcal G^\lambda(x)$ in \eqref{curlyg} is the periodic Green's function solving ${(\lambda^2 \partial_x^2-1)\mathcal G^\lambda(x)=\delta(x)}$, and the gradient reads
\[
\nabla\L_{\lambda,a}(\gamma)=\frac{\L(\gamma)^{a-2}}{\lambda^2}\Big(\gamma+\gamma\ast_\gamma \mathcal G^\lambda\Big),
\]
where the $\ast_\gamma$ convolution is defined by \eqref{convolution}. Although it is derived under the assumption that $\gamma$ is an immersion, $\nabla\L_{\lambda,a}(\gamma)$ is well-defined for any $\gamma\in W^{1,1}(\S,\R^2)$ with $\length(\gamma)\neq 0$. We therefore define:

\begin{definition}
Let $I$ be an interval containing $0$.
We say $X:\S\times I\to \R^2$ is an $H^1_{\lambda,a}(\gamma)$-gradient flow of the length functional if it satisfies
\begin{equation}\label{GF}\tag{GFa}
\partial_t X(u,t)=F_{\lambda,a}(X):= 
\begin{cases}
- \frac{\L(X)^{a-2}}{\lambda^2}\Big(X(u,t)+(X\ast_X \mathcal G^\lambda) (u,t)\Big), & \length(X)\neq 0 \\
0 & \length(X)=0
\end{cases}
\end{equation}
\end{definition}

An illuminating exact solution is given by round circles: under the flow, a circle of radius $r(t)$ remains round and $r$ solves an autonomous ODE whose decay rate depends on $a$ and $\lambda$ (Section~\ref{sec:circle}). In particular, circles shrink self-similarly to a point, with extinction in finite time for $a<2$ and in infinite time for $a\ge 2$.

From \eqref{curlyg} and \eqref{convolution} we observe that under dilation by a factor of $\alpha$ the gradient satisfies $\nabla\L_{\lambda,a}(\alpha \gamma)=\alpha^{a-1} \nabla\L_{\lambda,a}(\gamma)$. That is, the gradient and therefore also the gradient flow are $(a-1)$-homogeneous. It follows that when $a=2$ the flow is \emph{equivariant} under Euclidean scaling: if $X$ solves \eqref{GF2} then so does $\alpha X$ for any $\alpha>0$.

We treat \eqref{GF} as an ODE on the optimal low-regularity space $W^{1,1}(\S,\R^2)$: this space is
the largest in which $\L$ is well-defined and $F_{\lambda,a}$ satisfies the continuity properties needed for the Picard-Lindel\"of theorem. It turns out that the case $a=2$ is easiest to analyse, and, as we show in Proposition~\ref{timescaling}, solutions of \eqref{GF} differ from solutions to \eqref{GF2} only by a time reparametrisation. This observation allows us to focus for the most part on \eqref{GF2}, and then extend results to \eqref{GF}.

\medskip

Our main results are as follows.

\begin{theorem}\label{thm:main1}
Let $X_0\in W^{1,1}(\S,\R^2)$ with $\length(X_0)>0$, and $\lambda>0$, $a\in\R$. Then there is a unique positive-length solution $X\in C^1([0,T_a),W^{1,1}(\S,\R^2))$ to \eqref{GF} with $X(\cdot,0)=X_0$, where 
$T_a=\infty$ if $a\geq 2$, while for $a<2$, 
\[ 0<T_a \leq \frac{1+8\lambda^2}{4(2-a)}\length(X_0)^{2-a}<\infty.
\]
As $t\uparrow T_a$, $X(\cdot,t)$ converges in $W^{1,1}(\S,\R^2)$ to a constant map $u\mapsto \x_\infty \in \R^2$ with ${\abs{\x_\infty}\leq \norm{X_0}_{L^\infty}}$. 
\end{theorem}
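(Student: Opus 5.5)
I first reduce to the case $a=2$ via the time reparametrisation of Proposition~\ref{timescaling}. If $Y$ solves \eqref{GF2}, then $X(u,t)=Y(u,\tau(t))$ solves \eqref{GF}, where $\tau'(t)=\L(Y(\tau))^{a-2}$. The plan is therefore to prove global existence and convergence for \eqref{GF2}, and afterwards compute $T_a=\int_0^\infty \L(Y(\tau))^{2-a}\,d\tau$.

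For \eqref{GF2} I set $F=F_{\lambda,2}$ on the open set $U=\{\gamma\in W^{1,1}:\L(\gamma)>0\}$. Here $F(\gamma)=-\lambda^{-2}(\gamma+\gamma\ast_\gamma\G)$, with the convolution taken against normalised arc length $s(u)=\L(\gamma)^{-1}\int_0^u|\gamma'|$. Local Lipschitz continuity of $F$ on $U$ should follow from three facts: $\G$ is Lipschitz and bounded; $\gamma\mapsto s_\gamma$ is Lipschitz from $W^{1,1}$ to $C^0$ near $\gamma$ with $\L(\gamma)>0$; and the derivative is $(\gamma\ast_\gamma\G)'(u)=\int \gamma(v)\,\G'(s(u)-s(v))\,ds(v)\cdot s'(u)$, where $s'=|\gamma'|/\L$ is Lipschitz in $L^1$. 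Picard--Lindel\"of then gives a unique local $C^1$ solution.

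To get global existence for $a=2$, I need the length to stay positive and the $W^{1,1}$ norm to stay bounded. The key computation is that $(\gamma\ast_\gamma\G)'$ is a multiple of $|\gamma'|$, so $\partial_t X'=-\lambda^{-2}X'+\mu(u,t)|X'|$, where $|\mu|\le C\|X\|_{L^\infty}\sup|\G'|$. I will also need a maximum principle: the integral of $\G$ over a period is $-1$, so $-X\ast\G$ is an average of $X$ and $X_t=\lambda^{-2}(\text{average}-X)$. Hence $\|X\|_{L^\infty}$ is nonincreasing, and more strongly the convex hull shrinks, which gives $|\x_\infty|\le\|X_0\|_{L^\infty}$. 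With these in hand, $\frac{d}{dt}\L=D\L\,F=-\|F\|^2_{H^1_{\lambda,2}}\le0$, giving a bound from above. For positivity I would aim at an exponential estimate $\frac{d}{dt}\L\ge -c\L$, perhaps $\frac{d}{dt}\L=-\lambda^{-2}\L+\int\langle T,\mu\rangle|X'|$ with $|\mu|\lesssim \L\cdot(\text{stuff})$. The scale-equivariance should help here: at $a=2$ the flow is scale-invariant, so $\mu$ should be controlled by $\|X\|_{L^\infty}/\L$. This is delicate, and it is the step I expect to be the main obstacle.

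For the rate of decay I would use the gradient structure together with a Wirtinger-type inequality, aiming at $\frac{d}{dt}\L\le -c\L$ with $c=4/(1+8\lambda^2)$. The expected source of this constant is that the metric is comparable to the $H^1$ norm, and that $D\L(\gamma)(\gamma-\bar\gamma)=\L$ bounds $\|\nabla\L\|$ from below by $\L/\|\gamma-\bar\gamma\|_{H^1}$, where $\|\gamma-\bar\gamma\|^2_{L^2(\gamma)}\le\L^3/4$. This gives $\L(Y(\tau))\le \L(X_0)e^{-c\tau}$, and then $T_a\le\int_0^\infty \L(X_0)^{2-a}e^{-(2-a)c\tau}d\tau$, which is the stated bound; for $a\ge2$ the same integral diverges, so $T_a=\infty$. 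Finally, for convergence in $W^{1,1}$ I use $\|X'\|_{L^1}=\L\to0$. In $L^\infty$, the shrinking nested convex hulls with diameter at most $\L$ give a Cauchy limit $\x_\infty$. Since the constant parts converge and $\|X-\x_\infty\|_{W^{1,1}}\lesssim \L+\mathrm{diam}$, this completes the argument.
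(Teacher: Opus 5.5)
The gap is the step you flag yourself, positivity of $\L$ along \eqref{GF2}, and the route you suggest cannot close it. Since $s'=|X'|/\L$, your formula gives $\mu(u)=-\frac{1}{\lambda^2\L}\int_0^1 X(v)\,(\G)'(s(u)-s(v))\,\hat d v$. The bound you can get from this is $|\mu|\le \lambda^{-2}\sup|(\G)'|\,\norm{X}_{L^\infty}/\L$. That only yields $\frac{d}{dt}\L\ge -\lambda^{-2}\L-\lambda^{-2}\sup|(\G)'|\,\norm{X_0}_{L^\infty}$, which is compatible with $\L$ reaching $0$ in finite time. Scale-equivariance does not help, because $\norm{X}_{L^\infty}/\L$ is not translation invariant: a curve collapsing onto a point away from the origin has $\norm{X}_{L^\infty}/\L\to\infty$. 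Without a lower bound on $\L$ you cannot conclude $T_a=\infty$ for $a\ge 2$ (for $a=2$ the two flows coincide). Nor can you conclude that the time change $\theta'=\L^{2-a}$ is strictly increasing onto $[0,T_a)$. Note also that a bound on $\norm{X}_{W^{1,1}}$ alone is not a continuation criterion in infinite dimensions. What is needed is a bound on $\norm{F_\lambda(X)}_{W^{1,1}}$, so that $X(t)$ is Cauchy as $t\uparrow T_{\max}$, together with positive length of the limit.

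The missing idea is to use translation invariance. Since $\int_0^1 (\G)'=0$ and $\rho_X$ pushes $\hat d v$ forward to Lebesgue measure, you may replace $X(v)$ by $X(v)-X(u)$ in $\mu$. On a closed curve $|X(v)-X(u)|\le \L/2$, so \eqref{Gprime} gives $|\mu|\le \frac{1}{2\lambda^3}\tanh(\frac{1}{4\lambda})$, a constant, and hence $\L(t)\ge \L_0e^{-Ct}$.

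The paper gets the same conclusion by integrating by parts so that the derivative falls on the constant-speed reparametrisation, whose speed is exactly $\L$. This is \eqref{Fdashest}, $|(F_\lambda(\gamma))'(u)|\le \frac{2}{\lambda^2}|\gamma'(u)|$. It gives \eqref{lengthlowerbound}, and together with \eqref{Fest1} the velocity bound $\norm{F_\lambda(X)}_{W^{1,1}}\le\frac{5}{2\lambda^2}\L$. It also shows the pause set is invariant in time. You need that to make sense of $\frac{d}{dt}\L=-\norm{F_\lambda(X)}^2_{H^1_{\lambda,2}}$ for non-immersed data; the paper instead reduces to immersed data via Lemma~\ref{reparam}.

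Once this is in place the rest of your plan works, and parts of it improve on the paper. Cauchy--Schwarz in the metric, $\L=\ip{\nabla\L,X-\bar X}_{H^1_{\lambda,2}}$ with $\norm{X-\bar X}^2_{H^1_{\lambda,2}}\le \L(\frac14+\lambda^2)$, gives $\L'\le-\frac{4}{1+4\lambda^2}\L$. This is sharper than the paper's Young-inequality rate $\frac{4}{1+8\lambda^2}$. Wirtinger about the arc-length centroid even gives the circle rate $\frac{4\pi^2}{1+4\pi^2\lambda^2}$. Your nested convex hulls are also a valid alternative to the paper's integrated-velocity Cauchy argument for convergence.
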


We remark that for $1<a<2$ the positive-length solution has a stationary $C^1$ continuation after shrinking to a point, since $\norm{F_{\lambda,a}(X(t))}_{W^{1,1}}\to 0$ by Corollary \ref{Fa-bounds}. For $a=1$ this does not hold; the explicit solution for evolving circles in Section \ref{sec:circle} provides a counterexample, and shows that for $a<1$ the velocity may blow up as extinction is approached.  

As with the inhomogeneous Sobolev flow studied in \cite{schrader2023h}, if $X_0$ is $C^1$ and immersed then so is $X(\cdot, t)$ for as long as the flow exists. Our second main result is that $C^2$ regularity and convexity are also preserved.

\begin{theorem}\label{thm:main2}
Let $X_0\in W^{1,1}(\S,\R^2)$ with $\length(X_0)>0$, and $\lambda>0$, $a\in\R$, and let $X\in C^1([0,T_a),W^{1,1}(\S,\R^2))$ be the solution to \eqref{GF} with $X(\cdot,0)=X_0$. If $X_0\in C^2(\S,\R^2)$ then $X\in C^1([0,T_a),C^2(\S,\R^2))$,
and if $X_0$ is also an immersion such that $X_0(\S)$ is the boundary of a strictly convex set in $\R^2$, then $X(\S,t)$ is the boundary of a strictly convex set for all $t$. 
\end{theorem}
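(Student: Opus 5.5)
By Proposition~\ref{timescaling}, solutions of \eqref{GF} are time reparametrisations of solutions of \eqref{GF2}, and both $C^2$ regularity and convexity of the image are invariant under a change of time variable. I will therefore prove both statements for $a=2$ and transfer them to general $a$. For $a=2$ the equation is
\[
\partial_t X=-\tfrac{1}{\lambda^2}\big(X+X\ast_X\mathcal G^\lambda\big),
\]
a linear equation in $X$ with a nonlocal term. I will use the representation of the convolution from \eqref{convolution}: in normalised arclength $s(u)=\frac{1}{\L(X)}\int_0^u|X'|$, one has $(X\ast_X\mathcal G^\lambda)(u)=\int_{\S}\mathcal G^\lambda(s(u)-s(v))\,X(v)\,\frac{|X'(v)|}{\L(X)}\,dv$.

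\textbf{Step 1: preservation of $C^1$ and $C^2$.} Write $Y=X\ast_X\mathcal G^\lambda$, so that $X(t)=e^{-t/\lambda^2}X_0-\frac1{\lambda^2}\int_0^t e^{-(t-\tau)/\lambda^2}Y(\tau)\,d\tau$. The claim then reduces to showing that $Y(\tau)$ is as regular in $u$ as $X_0$. Differentiating under the integral gives
\[
Y'(u)=s'(u)\int\mathcal G^{\lambda\,\prime}(s(u)-s(v))X(v)\,ds(v),\qquad s'=\frac{|X'|}{\L}.
\]
Here $\mathcal G^\lambda$ is Lipschitz, with a single jump in its derivative at $0$, so the integral factor is Lipschitz in $s$ and hence $W^{1,\infty}$ in $u$ whenever $X\in W^{1,1}$. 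Thus $Y'$ inherits the regularity of $|X'|$: it is continuous if $X\in C^1$.

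For second derivatives, the identity $\lambda^2\mathcal G''-\mathcal G=\delta$ gives
\[
Y''=\frac{s''}{s'}Y'+(s')^2\lambda^{-2}\big(Y+X\big).
\]
Since $s''=\langle X',X''\rangle/(|X'|\L)$, this expression is continuous provided $X\in C^2$ and $|X'|>0$.

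The main obstacle I expect is non-immersed $C^2$ data. There I would avoid dividing by $s'$ and write instead $Y'=s'\,g(s(u))$ with $g$ Lipschitz. Then $Y''=s''g+(s')^2g'$, and $s''=(|X'|)'/\L$ is only bounded, not continuous, at zeros of $X'$. The natural fix is a bootstrap inside the Picard iteration of Theorem~\ref{thm:main1}: show that $F$ maps bounded sets of $C^2$ locally Lipschitz into $C^2$ on the set where $X'$ stays nonvanishing. For general $C^2$ data I would try to use that $|X'(t)|$ is controlled by the linear ODE for $X'$. Concretely,
\[
\partial_tX'=-\lambda^{-2}\big(X'+s'g\big),
\]
which, together with $X'(0)\neq0$ at immersion points, propagates immersion (as in \cite{schrader2023h}) and gives the $C^2$ bound on a closed subspace. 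Once $F$ is shown to be locally Lipschitz on $C^2$ (resp.\ $C^1$), uniqueness in $W^{1,1}$ identifies the $C^2$ solution with $X$, giving $X\in C^1([0,T_a),C^2)$.

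\textbf{Step 2: convexity.} For an immersed convex curve I will parametrise by the tangent angle and show that the curvature stays positive. Using the ODE for $X'$ above, the unit tangent $T=X'/|X'|$ evolves by
\[
\partial_t T=-\lambda^{-2}\frac{s'}{|X'|}\,\big(g-\langle g,T\rangle T\big)=-\lambda^{-2}\L^{-1}\,\langle g,N\rangle N,
\]
where $g(s)=\int\mathcal G'(s-\sigma)X\,d\sigma$. Hence the turning angle satisfies
\[
\partial_t\theta=-\frac{\langle g(s),N\rangle}{\lambda^2\L}.
\]
Strict convexity means that $\theta$ is strictly increasing in $s$ with total turning $2\pi$. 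I would compute $\partial_s\theta$ along the flow and apply a minimum principle to the curvature $k=\partial_s\theta/\L$. The derivative $\partial_s g$ equals $\lambda^{-2}(Y+X)$ plus a term of the form $\int\mathcal G'\,T$, and its normal component should be expressible as a positive integral in $\theta$ against a kernel that is positive on convex curves; this uses that $\mathcal G'$ is odd and monotone on $(0,1)$ and that $\langle X(\sigma)-X(s),N(s)\rangle\le0$ for convex curves. At a minimum point of $k$, if $k$ were to approach $0$, this positivity gives $\partial_t k\ge -Ck$, and Grönwall's inequality keeps $k>0$.

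If the direct computation proves messy, my fallback is to show that the linear-in-$X$ structure means $X(t)$ equals $e^{-t/\lambda^2}X_0$ plus a convex-combination-type averaging of $X_0$. Since averaging along a convex curve with kernel monotone in arclength, as in a Minkowski combination, preserves convexity, the image would remain convex.
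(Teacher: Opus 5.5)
\textbf{The gap: non-immersed $C^2$ data.} For immersed data your plan is the paper's. The reduction to $a=2$ via Proposition~\ref{timescaling} is the same. Your identity $Y''=\frac{s''}{s'}Y'+(s')^2\lambda^{-2}(Y+X)$ is the content of \eqref{fullFdashdash}. Local $C^2$-Lipschitz continuity near immersions, together with Lemma~\ref{PRimmersed} and uniqueness in $W^{1,1}$, gives the $C^2$ solution. Completing your tangent-angle computation, with $g=\L\,(T\ast_X\mathcal G^\lambda)$, reproduces \eqref{curvatureevo} exactly.

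The genuine gap is the non-immersed $C^2$ branch. You correctly spot that $s''$ is discontinuous at zeros of $X'$. However, your proposed fix again only works where $X'\neq0$, and in fact no fix is possible. Set $H(u):=\int_0^1X_u(v)\,\mathcal G^\lambda(\rho_X(v)-\rho_X(u))\,dv$, which is continuous in $u$. In \eqref{fullFdash} the term $\frac{|X_u(u)|}{\L}H(u)$ has a corner at an ordinary cusp ($X_0'(u_0)=0\neq X_0''(u_0)$) unless $H(u_0)=0$.

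Generically $H(u_0)\neq0$. Take a teardrop with a semicubical cusp at $\hat X_0(0)$, symmetric under reflection in the cusp axis, with second coordinate $y>0$ on $(0,\tfrac12)$. Then $H(u_0)$ equals $-2\int_0^{1/2}y\,(\mathcal G^\lambda)'\,d\tau$ times the unit normal to the axis. Since $(\mathcal G^\lambda)'>0$ on $(0,\tfrac12)$, this is nonzero.

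Hence already $\partial_tX(\cdot,0)=F_{\lambda,a}(X_0)\notin C^2$, so $X\notin C^1([0,T_a),C^2)$. The flow cannot repair this later either, because by \eqref{Fdashest} zeros of $X_u$ persist for all time. The $C^2$ assertion therefore needs the immersion hypothesis, which is exactly what the paper's $C^2$ lemma assumes. You should restrict to that case rather than try to bootstrap through cusps.

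\textbf{Points to tighten in the immersed case.} First, global continuation in $C^2$ needs an explicit a priori bound, which your formula supplies. Using $|s''|\le|X_{uu}|/\L$, $|g|\le\L$ and \eqref{Fest1}, you get $\|\partial_tX_{uu}\|_{L^\infty}\le\frac{2}{\lambda^2}\|X_{uu}\|_{L^\infty}+\frac{\|X_u\|_{L^\infty}^2}{2\lambda^4\L}$. Combine this with \eqref{C1bound}, \eqref{lengthlowerbound} and \eqref{immersionbound}; the last keeps $X$ in the region where $F_\lambda$ is $C^2$-Lipschitz. Gr\"onwall then closes the argument, as in the paper.

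Second, in Step 2 the sign of the zeroth-order term uses only $\mathcal G^\lambda<0$ and \eqref{curlygintegral}. Indeed $X+X\ast_X\mathcal G^\lambda=\int_0^1(X(v)-X(u))\,\mathcal G^\lambda(\rho_X(v)-\rho_X(u))\,\hat dv$ has negative inner product with the inward normal on a strictly convex curve. Oddness or monotonicity of $(\mathcal G^\lambda)'$ plays no role.

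Third, no $u$-derivatives of $k$ appear in \eqref{curvatureevo}. The bound $|\langle g,T\rangle|\le\L$ therefore gives $\partial_tk\ge-k/\lambda^2$ pointwise, so no minimum principle is needed. Pointwise Gr\"onwall suffices, together with a continuity argument and invariance of the turning number.

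Finally, drop the fallback. The equation is not linear in $X$, since the kernel depends on $X$ through $\rho_X$. No convexity-preserving averaging principle is available.
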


Theorem~\ref{thm:main2} follows by combining Proposition~\ref{timescaling}, 
Lemma~\ref{PRimmersed}, the $C^1$ and $C^2$ existence results and the convexity argument in Section~\ref{sec:convex}.

\emph{Numerics.} The explicit periodic Green’s function makes the right-hand side a convolution-type nonlocal operator with a rapidly decaying kernel for small $\lambda$. A straightforward discretisation is: sample $u$ uniformly, evaluate arc length by discrete cumulative sums of $|X'|$, and approximate the convolution by a circulant matrix multiplication. The formulation \eqref{eq:Fmod} with $\gamma(v)-\gamma(u)$ in the integrand suppresses round-off in the small-$\lambda$ regime. We have used such a numerical scheme to produce the illustrations in Figures \ref{fig1} and \ref{fig2} which appear to be sensible, but we do not make any claims of accuracy or convergence of the scheme in this paper.

In Figure \ref{fig1} we observe exponential decay of length for solutions to \eqref{GF2} with $\lambda=1$ and $\lambda=0.1$. It is clear in the case of $\lambda=0.1$ that the flow has a rounding effect, but this is not apparent for $\lambda=1$. However, it does become apparent after rescaling: Figure \ref{fig2} illustrates the result of rescaling the numerical solution for $\lambda=1$ by the factor $\length^{-1}$ at each timestep. Since \eqref{GF2} is equivariant under scaling we expect this rescaled numerical solution to be illustrative of the asymptotic behaviour of the actual solution. Our experiments suggest that the asymptotic shape is a round circle irrespective of the value of $\lambda$.

\begin{figure}[h]
    \centering
    \begin{tabular}{cc}
        \includegraphics[width=0.45\textwidth]{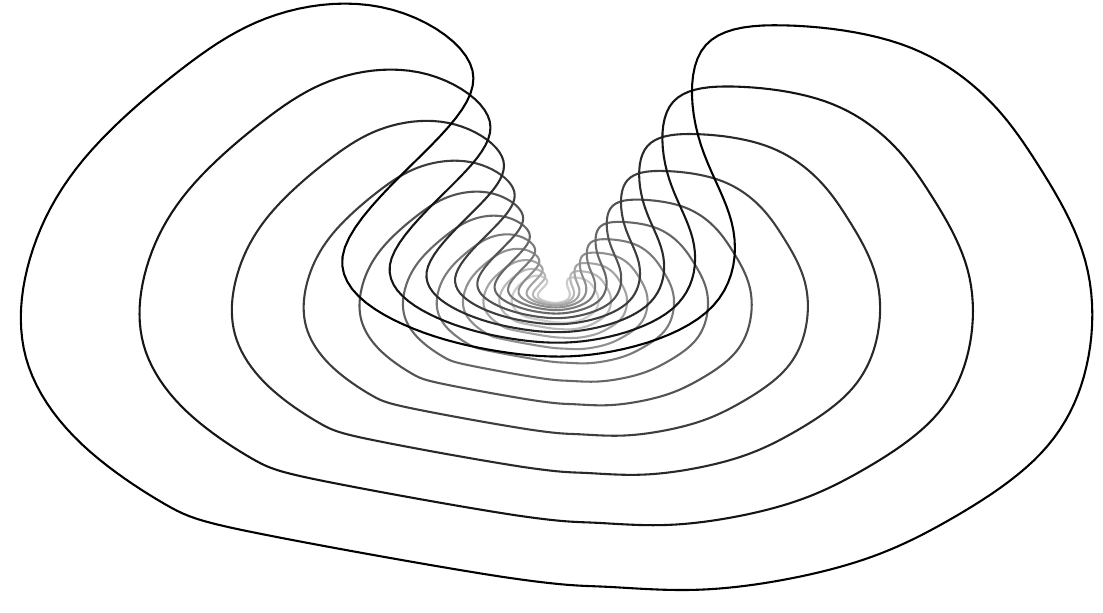} & \includegraphics[width=0.45\textwidth]{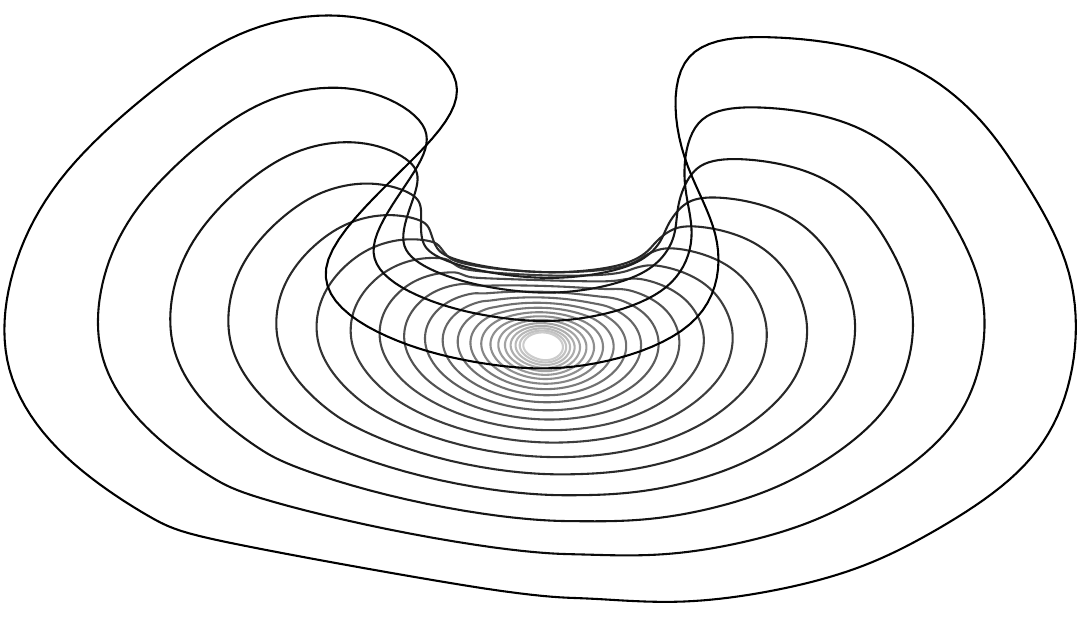} 
    \end{tabular}
    \caption{Numerical solutions to \eqref{GF2} with $\lambda=1$ (left) and $\lambda=0.1$ (right). }
    \label{fig1}
\end{figure}

\begin{figure}[h]
    \centering
        \includegraphics[width=0.45\textwidth]{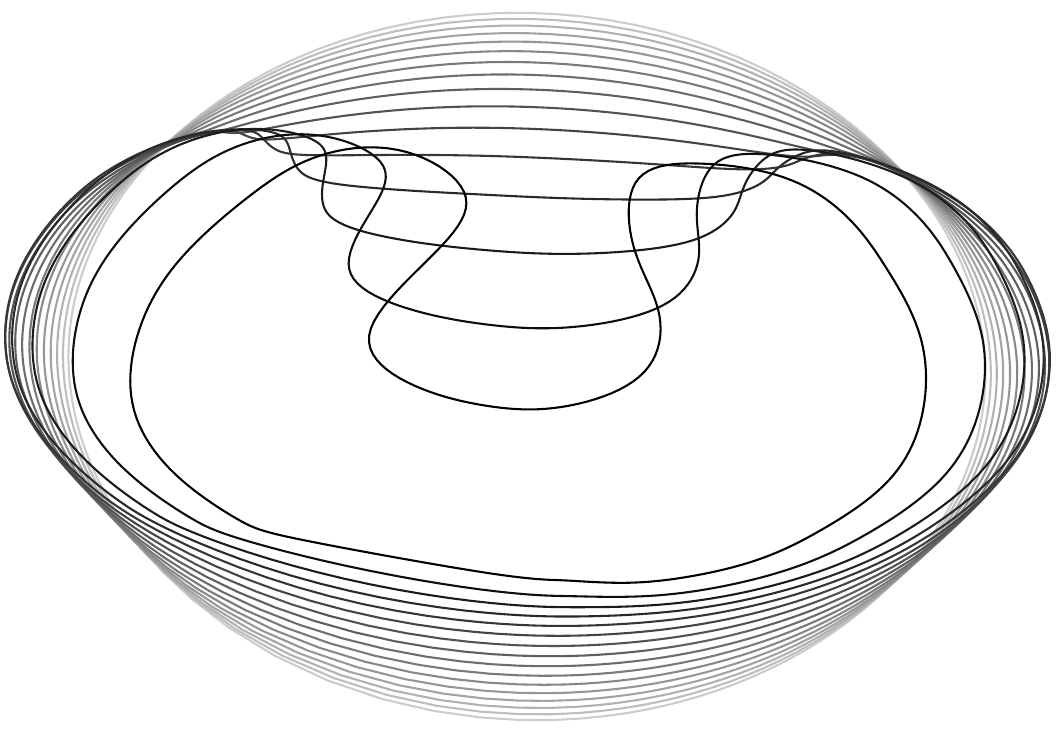}
    \caption{Rescaled numerical solution to \eqref{GF2} with $\lambda=1$.}
    \label{fig2}
\end{figure}

The remainder of the paper is organised as follows. In Section~\ref{sec:formulation} we derive an explicit formula depending on $a$ and $\lambda$ for the gradient of length with respect to the $H^1_{\lambda,a}$ metrics \eqref{metrics}, and record some properties of the Green's function $\G$. Section~\ref{sec:circle} treats the self-similar solution for evolving round circles. Section~\ref{sec:existence} establishes local well-posedness of the gradient flow for each $\lambda>0, a$, by verifying the Lipschitz estimates required to apply the Picard-Lindel\"of theorem. Section~\ref{sec:conv} upgrades this to global existence for $a=2$, proves exponential decay of length and hence convergence to a constant map. These results are then extended to arbitrary $a$ by a time reparametrisation, completing the proof of Theorem~\ref{thm:main1}. Finally, in Section~\ref{sec:convex} we demonstrate that higher differentiability and convexity are preserved along the flow, i.e. Theorem~\ref{thm:main2}.

\section*{Acknowledgements}
The second author acknowledges support from ARC Future Fellowship FT250100880 and ARC Discovery Project DP250101080.
The third author acknowledges support from ARC DECRA DE190100379. 

\section*{Tool and computational resource disclosure}
The numerical simulations and illustrations in Figures \ref{fig1} and \ref{fig2} were generated using the Julia programming language. Claude Opus 4.8 was used to assist in editing and to discuss exposition. This included suggesting references \cite{AmbrosioGigliSavare2008} and \cite{hytonen2016analysis}.

\section{Formulation} \label{sec:formulation}

The flow \eqref{GF} is driven by the gradient of $\length$ with respect to $H^1_{\lambda,a}$, which we now compute in closed form. The main result of this section, Lemma~\ref{gradientlemma}, expresses the gradient as a convolution in normalised arc-length parametrisation against an explicit periodic Green’s function $\G$; we then record several equivalent forms and integral identities for $\G$ which will be used throughout the paper. 

If  $\gamma\in W^{1,1}(\S,\R^2)$ has $|\gamma'(u)|=0$ on a measurable set, then the length functional is not differentiable at $\gamma$ and the $H^1_{\lambda,a}(\gamma)$ metric \eqref{metrics} is not well defined.
However, if we assume that $|\gamma'|>0 \text{ a.e.}$ in $\S$, then the length is G\^ateaux (but not Frech\'et) differentiable, which is sufficient for defining a gradient. Indeed, on the pause set $P:=\{u\in\S:\gamma'(u)=0\}$, the difference quotient of the integrand in $\length$ satisfies
\[
\frac{1}{\varepsilon} (\abs{\gamma'(u)+\varepsilon v'(u)}-\abs{\gamma'(u)})=\sgn(\varepsilon)\abs{v'(u)}, \quad \text{for } u\in P
\]
Since $|\gamma'(u)|$ is otherwise differentiable, the left and right directional derivatives are 
\[
\partial_\varepsilon^\pm \length(\gamma+\varepsilon v)\big|_{\varepsilon=0}
=\int_{\S\setminus P}\frac{\ip{\gamma',v'}}{\abs{\gamma'}}\,du
\pm\int_{P}\abs{v'}\,du.
\]
If the measure of $P$ is non-zero then there are variations for which these left and right derivatives are not equal, but if $|P|=0$ all directional derivatives exist.

We will therefore compute the gradient of $\length$ with respect to $H^1_{\lambda,a}$ on the space
\[
\wimm:=\{\gamma\in W^{1,1}(\S,\R^2): |\gamma'(u)|>0 \text{ a.e.}\}
\]
by first computing the gradient at the constant speed parametrisation:
\[ \hat\gamma(x):= \gamma \circ \rho_\gamma^{-1}(x) ,\quad  \text{where} \quad  \rho_\gamma (u):=\frac{1}{\length(\gamma)}\int_0^u\abs{\gamma'(\tau)}d\tau.\] 
This will then give the gradient at any $\gamma\in\wimm$ using reparametrisation (see \eqref{param}).

It will be convenient to introduce the following abbreviations:
\[
\phat_u:=\frac{\length(\gamma)}{\abs{\gamma'(u)}}\partial_u \quad \text{and}\quad \hat du :=\frac{\abs{\gamma'(u)}}{\length(\gamma)}du,
\]
which satisfy $\hat\partial_u f(u) = \hat{f}'(\rho_\gamma(u))$ and $\int_0^1 f\,\hat{d}u = \int_0^1 \hat{f}(x)\,dx$, where $\hat{f} := f \circ \rho_\gamma^{-1}$.

\begin{lemma}\label{gradientlemma}
The gradient of $\length$ with respect to $H^1_{\lambda,a}$ at $\gamma\in \wimm$, is given by 
\begin{equation}\label{thegradient}
\nabla\length_{\lambda,a}(\gamma)(u)=\frac{\length(\gamma)^{a-2} }{\lambda^2} \left (\gamma(u)+(\gamma \ast_\gamma \mathcal G^\lambda)(u)\right ),
\end{equation}
where $\mathcal{G}^\lambda:\S\to \mathbb R$ is the periodic extension of
\begin{equation}\label{curlyg}
\mathcal{G}^\lambda(x):= -\frac{\cosh\left ( \frac{\abs{x}-1/2}{\lambda}\right )}{2\lambda \sinh\left (\frac{1}{2\lambda}\right)},
\end{equation}
and the $\gamma$-convolution is defined by
\begin{equation}\label{convolution}
(f\ast_\gamma \mathcal G^\lambda)(u):= \int_{0}^1 f(\upsilon)\mathcal{G}^\lambda(\rho_\gamma(\upsilon)-\rho_\gamma(u)) \hat d\upsilon .
\end{equation}
\end{lemma}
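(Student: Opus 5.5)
The plan is to work first at the constant speed parametrisation $\hat\gamma$, where $\abs{\hat\gamma'}\equiv\length(\gamma)$, and then transfer the result to $\gamma$ by reparametrisation. Throughout, write $L=\length(\gamma)$.

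\textbf{Rewriting both sides in the variable $x=\rho_\gamma(u)$.} Since $|P|=0$, the change of variables $x=\rho_\gamma(u)$ is legitimate: $\rho_\gamma$ is absolutely continuous and strictly increasing. The identities $\hat\partial_u f=\hat f'\circ\rho_\gamma$ and $\int f\,\hat du=\int\hat f\,dx$ then give
\[
D\length(\gamma)v=\int_0^1\frac{\ip{\gamma',v'}}{\abs{\gamma'}}du=\int_0^1\ip{\hat\gamma'(x),\hat v'(x)}\frac{dx}{L}.
\]
The metric \eqref{metrics} likewise becomes
\[
\ip{v,w}_{H^1_{\lambda,a}(\gamma)}=L^{1-a}\int_0^1\ip{\hat v,\hat w}\,dx+\lambda^2L^{1-a}\int_0^1\ip{\hat v',\hat w'}\,dx,
\]
using $\frac{1}{\abs{\gamma'}}v'=\frac1L\hat v'\circ\rho_\gamma$ and $\abs{\gamma'}du=L\,dx$.

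\textbf{Identifying the gradient.} Set $g=\nabla\length_{\lambda,a}(\gamma)$. The defining identity $D\length(\gamma)v=\ip{g,v}_{H^1_{\lambda,a}(\gamma)}$, required for all $v$ (say smooth $\hat v$), reads
\[
L^{1-a}\int_0^1\bigl(\ip{\hat g,\hat v}+\lambda^2\ip{\hat g',\hat v'}\bigr)dx=L^{-1}\int_0^1\ip{\hat\gamma',\hat v'}\,dx .
\]
Integrating by parts in the distributional sense on $\S$, this is the weak form of the ODE
\[
(\lambda^2\partial_x^2-1)\hat g=L^{a-2}\,\hat\gamma''
\]
(the equation holds with $\hat g$, $\hat\gamma$ vectors and the operator acting componentwise). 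Because $\lambda^2\partial_x^2-1$ is invertible on periodic distributions (its Fourier symbol $-(4\pi^2\lambda^2n^2+1)$ never vanishes), the solution is unique and is
\[
\hat g=L^{a-2}\,\mathcal G^\lambda*\hat\gamma''.
\]
Moving the derivatives onto the kernel gives $\mathcal G^\lambda*\hat\gamma''=(\mathcal G^\lambda)''*\hat\gamma$. Then the Green's identity $\lambda^2(\mathcal G^\lambda)''=\mathcal G^\lambda+\delta$ yields
\[
\hat g=\frac{L^{a-2}}{\lambda^2}\bigl(\hat\gamma+\mathcal G^\lambda*\hat\gamma\bigr).
\]
Since $\mathcal G^\lambda$ is even, this convolution is $\int\hat\gamma(y)\mathcal G^\lambda(y-x)\,dy$. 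Composing with $\rho_\gamma$ and substituting $y=\rho_\gamma(\upsilon)$ turns it into $\gamma\ast_\gamma\mathcal G^\lambda$ as defined in \eqref{convolution}, which gives \eqref{thegradient}.

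\textbf{The Green's function.} It remains to verify that \eqref{curlyg} is the periodic Green's function, i.e.\ that $\lambda^2(\mathcal G^\lambda)''-\mathcal G^\lambda=\delta$.
\begin{itemize}
\item On $(0,1)$, $\cosh((x-1/2)/\lambda)$ solves $\lambda^2f''=f$, so the equation holds away from $x=0$.
\item The periodic extension is $C^1$ at $x=1/2$ by symmetry, since the derivative of $\cosh$ vanishes there.
\item At $x=0$ the derivative jumps from $-\frac{\sinh(-1/(2\lambda))}{2\lambda^2\sinh(1/(2\lambda))}=\frac1{2\lambda^2}$ at $0^+$ to $-\frac1{2\lambda^2}$ at $0^-$. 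The jump of $(\mathcal G^\lambda)'$ is therefore $-\frac1{\lambda^2}$ going from $0^+$ back to $0^-$, i.e.\ $+\frac1{\lambda^2}$ across $0$ from left to right. Multiplying by $\lambda^2$ gives exactly $\delta$, with the correct sign.
\end{itemize}

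\textbf{Main obstacle.} The delicate part is justifying the weak formulation in low regularity. Here $\hat\gamma\in W^{1,1}$ only, so $\hat\gamma''$ is a distribution, and we must check that $\hat g$ really lies in the Hilbert completion of the metric, i.e.\ in $H^1$. This follows because $\mathcal G^\lambda$ is Lipschitz: $\hat g'=-\frac{L^{a-2}}{\lambda^2}\,(\mathcal G^\lambda)'*\hat\gamma$ up to the $\hat\gamma$ term, which is in $W^{1,1}\subset L^\infty$. It is cleanest to verify the defining identity directly, integrating by parts against the explicit kernel, rather than to invert the operator abstractly. One also needs $\rho_\gamma^{-1}$ to be well defined, which holds since $|P|=0$ makes $\rho_\gamma$ strictly increasing.
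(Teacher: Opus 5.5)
Your proof is correct and follows essentially the same route as the paper. You pass to the constant-speed variable $x=\rho_\gamma(u)$, integrate by parts to get $(\lambda^2\partial_x^2-1)\hat g=\length(\gamma)^{a-2}\hat\gamma''$, invert with $\mathcal G^\lambda$, move the derivatives onto the kernel using $\lambda^2(\mathcal G^\lambda)''=\mathcal G^\lambda+\delta$, and change variables back. Your direct substitution replaces the paper's appeal to $\nabla\length_{\lambda,a}(\gamma\circ\phi)=\nabla\length_{\lambda,a}(\gamma)\circ\phi$, and your jump computation and Fourier-symbol uniqueness argument make explicit what the paper only asserts.

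One small tightening in your regularity remark: for the $\hat\gamma$ term itself, $\hat g\in H^1$ needs $\hat\gamma'\in L^2$. That comes from $|\hat\gamma'|=\length(\gamma)$ a.e., not from $W^{1,1}\subset L^\infty$.
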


\begin{proof}
To find the gradient of length with respect to the metrics \eqref{metrics} we need to solve for $\nabla\length_{\lambda,a}(\gamma)$ in 
\begin{equation}\label{graddef}
D\length(\gamma) v =\ip{\nabla\length_{\lambda,a}(\gamma),v}_{H^1_{\lambda,a}(\gamma)}.
\end{equation}
Arguing as in \cite{schrader2023h} Section 3, the reparametrisation invariance of $\length$ and the $H^1_{\lambda,a}(\gamma)$ metrics ensure that for any reparametrisation $\phi:\S\to \S$ we have $\nabla\length_{\lambda,a}(\gamma\circ \phi)=\nabla\length_{\lambda,a}(\gamma)\circ \phi$. In particular
\begin{equation}\label{param}
\nabla\length_{\lambda,a}(\hat\gamma)(x)=\nabla\length_{\lambda,a}(\gamma)\circ \rho_\gamma^{-1}(x)
\end{equation}
where as above $\hat\gamma:=\gamma\circ \rho_\gamma^{-1}$. We therefore begin by calculating $\nabla\length_{\lambda,a}(\hat\gamma)$.

With $\hat v:=v\circ\rho_\gamma^{-1}$, in the constant speed parametrisation  \eqref{dlength} becomes:
\begin{equation*}
D\length(\gamma) v = \frac{1}{\length(\gamma)}\int_0^1 \ip{\hat \gamma'(x),\hat v'(x)} dx, 
\end{equation*}
the metric is
\[
\ip{v,w}_{H^1_{\lambda,a}(\gamma)}=\frac{1}{\length(\gamma)^{a-1}}\int_0^1 \ip{\hat v(x),\hat w(x)}+\lambda^2 \ip{\hat v'(x),\hat w'(x)} dx,
\]
and then after integrating by parts, in constant-speed parametrisation \eqref{graddef} is equivalent in the sense of distributions to
\[
\lambda^2(\nabla\length_{\lambda,a}(\hat\gamma))''(x)-\nabla\length_{\lambda,a}(\hat\gamma)(x)
=\length^{a-2} \hat\gamma''(x).
\]
We can solve this using the fundamental solution:
\begin{equation}\label{grad}
\nabla\length_{\lambda,a}(\hat\gamma)(x)=\length(\gamma)^{a-2}\,\int_0^{1}  \hat\gamma''(\tau)\mathcal{G}^{\lambda}(\tau-x)\, d\tau
\end{equation}
where $\mathcal G^{\lambda}:[0,1]\to \R $ is the solution to
\begin{equation}\label{delta2}
\left (\lambda^2\partial_x^2-1 \right )\mathcal G^{\lambda}(x)=\delta(x)
\end{equation}
with periodic boundary conditions and the appropriate jump discontinuity in the first derivative.
The solution is given by \eqref{curlyg},
and we use the same notation $\mathcal{G}^\lambda$ for the periodic extension.

Integrating by parts (again in the distributional sense) in \eqref{grad} and using \eqref{delta2} gives
\begin{equation*}
\nabla\length_{\lambda,a}(\hat\gamma)(x)=\frac{\length (\gamma)^{a-2} }{\lambda^2} \left ( \hat \gamma(x)+(\hat \gamma \ast \mathcal{G}^\lambda)(x)\right )
\end{equation*}
and then converting back to an arbitrary parametrisation using \eqref{param}:
\begin{equation}\label{grad2}
\nabla\length_{\lambda,a}(\gamma)(u)=\frac{1}{ \lambda^2}\length (\gamma)^{a-2} \left (\gamma(u)+\int_0^1 \gamma(\upsilon)\mathcal{G}^\lambda\big( \rho_\gamma(\upsilon)-\rho_\gamma(u)\big ) \hat d\upsilon \right ).
\end{equation}
Using \eqref{convolution}, we can write \eqref{grad2} more compactly as \eqref{thegradient}.
\end{proof}

We conclude this section with some additional useful formulae.
With ${\hat\gamma''(\rho_\gamma^{-1}(u))=\hpartial^2_u\gamma(u)}$, the RHS of \eqref{grad} is proportional to the $\gamma$-convolution and we have the following equivalent expression for the gradient:
\begin{equation}\label{Kgrad}
    \nabla\length_{\lambda,a}(\gamma)=\length(\gamma)^{a-2}(\hpartial^2 \gamma \ast_\gamma \mathcal{G}^\lambda).
\end{equation}
Integrating \eqref{delta2} against a constant function and recalling that the second derivative of $\mathcal G^{\lambda} $ is a distributional derivative, we find that
\begin{align}\label{curlygintegral} 
&\int_0^1 \mathcal G^\lambda  (x)dx =-1 = -\int_0^1 \abs{\mathcal G^\lambda (x)} dx 
\end{align}
where the second equality holds because $\mathcal G^\lambda (x)<0$ for all $x\in [0,1]$. For the derivative of $\G$ we calculate 
\begin{equation*}
(\G) '(x)=-\frac{\sinh(\frac{\abs{x}-1/2}{\lambda})}{2\lambda^2 \sinh(\frac{1}{2\lambda})}\sgn(x),
\end{equation*} 
which is positive for $0\leq x\leq 1/2$, and satisfies $(\G) '(x)=- (\G) '(1-x)$. Therefore
\begin{equation}\label{Gprime}
\int_0^1 \big |( \G) '(x)\big |dx=2\int_0^{1/2}(\G) '(x)=\frac{\cosh(\frac{1}{2\lambda})-1}{\lambda \sinh (\frac{1}{2\lambda})}=\tfrac{1}{\lambda}\tanh(\tfrac{1}{4\lambda})
\end{equation}

\section{Evolution of a circle}
\label{sec:circle}

Before turning to the analysis of general solutions to \eqref{GF}, we examine the simplest non-trivial solutions: those with initial data equal to a round circle. 
Circles evolve self-similarly under \eqref{GF}, and exhibit the transition at $a=2$ from finite- to infinite-time extinction that we will also observe in the general case.

With the ansatz $X(u,t)=r(t)(\cos 2\pi u, \sin 2\pi u)$, direct calculation gives $\hat \partial^2 X=-(2\pi)^2 X$ and $X_t=\frac{r'}{r}X$. On the other hand, if $X$ satisfies \eqref{GF} then 
\begin{align*}
 X_t&=-\frac{1}{\lambda^{2}}\length(X)^{a-2}\left(X+X\ast_X\mathcal G^\lambda\right)\\
&=-\frac{1}{\lambda^{2}}(2\pi r)^{a-2}\left (X-\frac{1}{(2\pi)^2}\hat\partial^2 X\ast_X\mathcal G^\lambda \right)\\
&=-\frac{1}{\lambda^{2}}(2\pi r)^{a-2}\left (X+\frac{(2\pi r)^{2-a}}{(2\pi)^2}X_t\right )
\end{align*}
where we have used \eqref{Kgrad}. Rearranging: 
\[ X_t=- \frac{(2\pi)^a}{1+(2\pi\lambda)^2} r^{a-2} X \]
from which it follows that 
\[ r'(t)=- \frac{(2\pi)^a}{1+(2\pi\lambda)^2}r^{a-1}.
\]
Setting $b_{\lambda,a}:=\frac{(2\pi)^a}{1+(2\pi\lambda)^2}$ and $r_0=r(0)$, we find
\[
r(t)=
\begin{cases}
   \left ( r_0^{2-a}-(2-a)b_{\lambda,a}t\right )^{\frac{1}{2-a}} &\quad  a\neq 2\\
   r_0 e^{-b_{\lambda,a}t} & \quad a=2\\
\end{cases}
\]
which decays to zero in finite time if $a<2$ and infinite time if $a\geq 2$.

\section{Existence and uniqueness of solutions}
\label{sec:existence}

From \eqref{grad2} we observe that $\nabla\length_{\lambda,a}(\gamma)$ is well-defined even for those $\gamma\in \wsr$ which are not immersed. 
It is therefore possible to consider \eqref{GF} as an ODE on all of $\wsr$, and to prove the existence and uniqueness of solutions using the Picard-Lindel\"of theorem. 
This requires local boundedness and local Lipschitz estimates for $F_{\lambda,a}$. 
To begin with we target the special case $a=2$:
\begin{equation}\label{GF2}\tag{GF2}
\partial_t X(u,t)=F_{\lambda,2}(X)=
\begin{cases}
-\frac{1}{ \lambda^2} \left (X+X\ast_X\mathcal{G}^\lambda \right ) & \length(X)\neq 0 \\
0 & \length(X)=0
\end{cases}
\end{equation}

From now on we abbreviate $\nabla\length_{\lambda,2}$ to $\nabla\length_{\lambda}$ and $F_{\lambda,2}$ to $F_\lambda$. 

As in Section 2 it will often be convenient to work in constant speed parametrisation; however, having dropped the assumption that $|\gamma'(u)|>0$, invertibility of $\rho_\gamma$ is no longer guaranteed. 
Nevertheless a constant-speed reparametrisation can be defined by adapting a standard approach for arc-length reparametrisation that applies to rectifiable curves (e.g., Lemma 1.1.4 in \cite{AmbrosioGigliSavare2008}). 
We define $\varphi_\gamma:[0,1]\to[0,1]$ by 
\[
\varphi_\gamma(x):=\min\{u\in [0,1]:\rho_\gamma(u)=x\}
\]
so that $\varphi_\gamma$ is a right inverse of $\rho_\gamma$. 
Indeed, on the left we have 
\[ 
\varphi_\gamma(\rho_\gamma(u))=\min\{u_*\in[0,1]:\rho_\gamma(u_*)=\rho_\gamma(u)\}\leq u
\]
and therefore 
\begin{equation*}
\abs{\gamma\circ \varphi_\gamma\circ\rho_\gamma(u)-\gamma(u)}\leq \int_{\varphi_\gamma\circ\rho_\gamma(u)}^u \abs{\gamma'(\tau)}d\tau=\length(\gamma)(\rho_\gamma(u)-\rho_\gamma\circ\varphi_\gamma\circ\rho_\gamma(u))=0.
\end{equation*}
Defining the reparametrisation $\hat\gamma(x):=\gamma\circ\varphi_\gamma(x)$, the above shows that $\gamma=\hat\gamma\circ \rho_\gamma$. If $\gamma$ has pauses (measurable intervals where $|\gamma'(u)|=0$) then $\varphi_\gamma$ will have discontinuities, but $\hat\gamma(x)$ nevertheless turns out to be continuous and differentiable a.e., with $|\hat \gamma'(x)|=\length(\gamma)$ (see Appendix \ref{appendixA}). 

The following lemma collects two bounds on $F_\lambda$ which together control its $W^{1,1}$ norm. 

\begin{lemma}\label{bounds}
For any $\gamma\in W^{1,1}(\S,\R^2)$:
\begin{align}\label{Fest1}
\norm{F_\lambda(\gamma)}_{L^\infty}& \leq \frac{1}{2\lambda^2 }\length(\gamma), \quad \text{and} \\ 
\norm{(F_\lambda(\gamma))'}_{L^1}&\leq \frac{2}{\lambda^2}\length(\gamma).  \label{Fdest}
\end{align}
\end{lemma}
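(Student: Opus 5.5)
The plan is to do everything in the constant-speed reparametrisation $\hat\gamma=\gamma\circ\varphi_\gamma$. By Appendix~\ref{appendixA}, $\hat\gamma$ is Lipschitz with $|\hat\gamma'|=\length(\gamma)$ a.e. We also have $\gamma=\hat\gamma\circ\rho_\gamma$. If $\length(\gamma)=0$ then $F_\lambda(\gamma)=0$ and there is nothing to prove, so assume $\length(\gamma)>0$. Since $\int_0^1 f\,\hat d\upsilon=\int_0^1 \hat f(y)\,dy$, we get $F_\lambda(\gamma)=f\circ\rho_\gamma$ with
\[
f(x):=-\frac{1}{\lambda^2}\Big(\hat\gamma(x)+\int_0^1\hat\gamma(y)\G(y-x)\,dy\Big).
\]
Both estimates will be proved for $f$ and then transferred back through $\rho_\gamma$.

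For \eqref{Fest1}, use \eqref{curlygintegral} together with periodicity: $\int_0^1\G(y-x)\,dy=-1$ for every $x$. Hence
\[
f(x)=-\frac{1}{\lambda^2}\int_0^1\big(\hat\gamma(y)-\hat\gamma(x)\big)\G(y-x)\,dy .
\]
Since $\hat\gamma$ is a closed curve with speed $\length(\gamma)$, we have $|\hat\gamma(y)-\hat\gamma(x)|\le \length(\gamma)\,d_{\S}(x,y)\le \tfrac12\length(\gamma)$, using the shorter of the two arcs. Combined with $\int_0^1|\G|=1$ from \eqref{curlygintegral}, this gives $|f(x)|\le \length(\gamma)/(2\lambda^2)$ for all $x$. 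Because $F_\lambda(\gamma)=f\circ\rho_\gamma$, the same bound holds for $F_\lambda(\gamma)$.

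For \eqref{Fdest}, first differentiate $f$. The kernel $\G$ is continuous and Lipschitz, with a derivative that jumps only at $0$. So I would integrate by parts in $y$ (there are no boundary terms, by periodicity) to write the convolution term as $\int_0^1\hat\gamma'(y)\,\G(y-x)\,dy$ up to sign. Its $x$-derivative can then be computed, or we can simply observe that $f$ is Lipschitz with
\[
f'(x)=-\frac{1}{\lambda^2}\Big(\hat\gamma'(x)-\int_0^1\hat\gamma(y)(\G)'(y-x)\,dy\Big)=-\frac{1}{\lambda^2}\Big(\hat\gamma'(x)+\int_0^1\hat\gamma'(y)\G(y-x)\,dy\Big).
\]
Young's inequality on $\S$ then gives
\[
\|f'\|_{L^1}\le\frac{1}{\lambda^2}\big(\|\hat\gamma'\|_{L^1}+\|\hat\gamma'\|_{L^1}\|\G\|_{L^1}\big)=\frac{2}{\lambda^2}\length(\gamma).
\]
Finally, $\rho_\gamma$ is absolutely continuous and nondecreasing, and $f$ is Lipschitz. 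Therefore $f\circ\rho_\gamma$ is absolutely continuous with $(f\circ\rho_\gamma)'=f'(\rho_\gamma)\rho_\gamma'$ a.e., and the change-of-variables formula for monotone absolutely continuous maps gives
\[
\|(F_\lambda(\gamma))'\|_{L^1}=\int_0^1|f'(\rho_\gamma(u))|\,\rho_\gamma'(u)\,du=\|f'\|_{L^1}.
\]

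The main obstacle is this last step: justifying the chain rule and the change of variables when $\gamma$ has pauses, so that $\rho_\gamma$ is not invertible and $\varphi_\gamma$ is discontinuous. My plan is to avoid $\varphi_\gamma$ in the differentiation altogether. All derivatives are taken of the Lipschitz function $f$, which depends only on $\hat\gamma$, and the result is composed with the absolutely continuous monotone map $\rho_\gamma$. On pause intervals, $\rho_\gamma'=0$, and these intervals contribute nothing to either side. The standard change-of-variables formula for absolutely continuous monotone maps then handles the remainder of $\S$.
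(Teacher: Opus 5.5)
Your proposal is correct and follows essentially the same route as the paper's proof. For \eqref{Fest1} you use the same rewriting via $\int_0^1\G=-1$ together with the half-length bound. For \eqref{Fdest} you use the same constant-speed reparametrisation $\hat\gamma=\gamma\circ\varphi_\gamma$, the same integration by parts \eqref{GIP}, and the same pull-back through $\rho_\gamma$.

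The only difference is cosmetic. The paper bounds $(F_\lambda(\hat\gamma))'$ pointwise by $\frac{2}{\lambda^2}\length(\gamma)$ and applies the chain rule to obtain $|(F_\lambda(\gamma))'(u)|\le\frac{2}{\lambda^2}|\gamma'(u)|$; this pointwise estimate is reused later in Lemma \ref{C1Lipschitz}. You instead use Young's inequality in $L^1$ and the monotone change of variables, and you treat pauses more carefully than the paper does.
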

\begin{proof}
Assuming $\length(\gamma)$ is nonzero (otherwise the estimates are trivial), and recalling \eqref{curlygintegral}, we can rewrite $F_\lambda$ as 
\begin{equation}\label{eq:Fmod}
F_\lambda(\gamma)(u)=-\frac{1}{\lambda^2} \int_0^1(\gamma(\upsilon)-\gamma(u))\mathcal{G}^\lambda \big(\rho_\gamma(\upsilon)-\rho_\gamma(u)\big) \hat d\upsilon
\end{equation}
and then
\[
\abs{F_\lambda(\gamma)(u)}\leq \frac{1}{\lambda^2}\frac{\length(\gamma)}{2}\int_0^1 \abs{\mathcal{G}^\lambda \big(\rho_\gamma(\upsilon)-\rho_\gamma(u)\big)} \hat d\upsilon
\]
which, using \eqref{curlygintegral} again, gives \eqref{Fest1}.
For the second estimate, let $\hat{\gamma}(x) := \gamma \circ \varphi_{\gamma}(x)$, then
\[F_\lambda(\hat{\gamma})(x) = -\frac{1}{\lambda^2} \left( \hat{\gamma}(x) + \int_0^1 \hat{\gamma}(\tau)\, \mathcal G^\lambda(\tau - x)\, d\tau \right)\]
and using $\partial_x\mathcal G^\lambda(\tau-x)=-\partial_\tau\mathcal G^\lambda(\tau-x)$ with an integration by parts
\begin{equation}\label{GIP}
(F_\lambda(\hat{\gamma}))'(x) = -\frac{1}{\lambda^2} \left( \hat{\gamma}'(x) + \int_0^1 \hat{\gamma}'(\tau)\, \mathcal G^\lambda(\tau - x)\, d\tau \right)
\end{equation}
Recalling that $|\hat{\gamma}'(x)| = \mathcal{L}(\gamma)$, we estimate $|(F_\lambda(\hat{\gamma}))'(x)| \leq \frac{2}{\lambda^2} \mathcal{L}(\gamma)$. With the substitution $\tau = \rho_\gamma(v)$
\begin{equation}\label{Finvariance}
\begin{aligned} 
F_\lambda(\hat{\gamma})(\rho_\gamma(u)) &= -\frac{1}{\lambda^2} \left( \gamma(u) + \int_0^1 \gamma(v) \mathcal{G}^\lambda(\rho_\gamma(v) - \rho_\gamma(u)) \frac{|\gamma'(v)|}{\mathcal{L}(\gamma)} \, dv \right)
= F_\lambda(\gamma)(u) 
\end{aligned}
\end{equation}
and then
\begin{equation}\label{Fdashest}
|(F_\lambda(\gamma))'(u)| =  |(F_\lambda(\hat{\gamma}))'(\rho_\gamma(u))|\frac{|\gamma'(u)|}{\mathcal{L}(\gamma)} \leq \frac{2}{\lambda^2} |\gamma'(u)|,
\end{equation}
from which we obtain \eqref{Fdest}.
\end{proof}

Below we use $B_\varepsilon(\gamma_0)$ to denote the open $W^{1,1}$-ball centred at $\gamma_0$ with radius $\varepsilon$.
\begin{lemma} \label{lipschitz}
$F_\lambda$ is locally Lipschitz continuous on $W^{1,1}(\S,\R^2)$ away from constant maps. That is, for any $\gamma_0\in W^{1,1}(\S,\R^2)$ with nonzero length there exists $0<\varepsilon< \length(\gamma_0)$ and $c>0$ such that 
\begin{equation}\label{lip}
\norm{F_\lambda(\gamma)-F_\lambda(\beta)}_{W^{1,1}}\leq c \norm{\gamma-\beta}_{W^{1,1}}
\end{equation}
for all $ \beta, \gamma\in B_\varepsilon(\gamma_0)$.
\end{lemma}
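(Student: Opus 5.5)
The plan is to take $\varepsilon<\tfrac12\length(\gamma_0)$. Since $|\length(\gamma)-\length(\beta)|\le\norm{\gamma'-\beta'}_{L^1}$, every $\gamma\in B_\varepsilon(\gamma_0)$ then satisfies
\[
\tfrac12\length(\gamma_0)\le\length(\gamma)\le\tfrac32\length(\gamma_0).
\]
All factors $1/\length$ are therefore bounded and Lipschitz on the ball. Throughout I also use $\norm{\gamma-\beta}_{L^\infty}\le\norm{\gamma-\beta}_{W^{1,1}}$ (one-dimensional Sobolev embedding) and the monotone maps $\rho_\gamma$. For these I will verify
\[
|\rho_\gamma(u)-\rho_\beta(u)|\le \frac{2}{\length(\gamma)}\norm{\gamma'-\beta'}_{L^1},
\]
by splitting $\frac{|\gamma'|}{\length(\gamma)}-\frac{|\beta'|}{\length(\beta)}$ into a numerator change and a normalisation change. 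By the same splitting, the measures $\hat d\upsilon$ satisfy
\[
\int_0^1\left|\tfrac{|\gamma'|}{\length(\gamma)}-\tfrac{|\beta'|}{\length(\beta)}\right|\le C\norm{\gamma-\beta}_{W^{1,1}}.
\]

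\textbf{$L^\infty$ (hence $L^1$) part.} I will use the form \eqref{eq:Fmod}. The difference $F_\lambda(\gamma)(u)-F_\lambda(\beta)(u)$ telescopes into three terms.
\begin{enumerate}
\item The change in $\gamma(\upsilon)-\gamma(u)$ is bounded by $2\norm{\gamma-\beta}_{L^\infty}$ times $\int|\mathcal G^\lambda|=1$, using \eqref{curlygintegral}.
\item The change in the kernel argument is handled by the fact that $\mathcal G^\lambda$ is globally Lipschitz with constant $\sup|(\mathcal G^\lambda)'|=\frac{1}{2\lambda^2}$. This is multiplied by $|\gamma(\upsilon)-\gamma(u)|\le\length(\gamma)$ and by the $\rho$-difference bound above.
\item The change of measure is bounded by $\length(\gamma)\sup|\mathcal G^\lambda|$ times the weight estimate above.
\end{enumerate}
Together these give $\norm{F_\lambda(\gamma)-F_\lambda(\beta)}_{L^\infty}\le c\norm{\gamma-\beta}_{W^{1,1}}$.

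\textbf{Derivative part.} I will differentiate \eqref{eq:Fmod} in $u$ directly. Using $\int_0^1(\mathcal G^\lambda)'=0$, I expect
\[
(F_\lambda(\gamma))'(u)=-\frac1{\lambda^2}\Big(\gamma'(u)-\frac{|\gamma'(u)|}{\length(\gamma)}H_\gamma(u)\Big),
\]
where
\[
H_\gamma(u):=\int_0^1(\gamma(\upsilon)-\gamma(u))(\mathcal G^\lambda)'(\rho_\gamma(\upsilon)-\rho_\gamma(u))\,\hat d\upsilon .
\]
This is consistent with \eqref{GIP} and \eqref{Fdashest}. To justify it rigorously I would work first in the constant-speed picture and then transfer back via \eqref{Finvariance}. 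The terms then split as follows.
\begin{enumerate}
\item $\gamma'-\beta'$ is controlled in $L^1$ directly.
\item The difference $\frac{|\gamma'|}{\length(\gamma)}-\frac{|\beta'|}{\length(\beta)}$ is controlled in $L^1$ and multiplied by $\norm{H_\gamma}_{L^\infty}\le \length(\gamma)\tfrac1\lambda\tanh(\tfrac1{4\lambda})$, using \eqref{Gprime}.
\item What remains is $\frac{|\beta'|}{\length(\beta)}(H_\gamma-H_\beta)$. Its $L^1$ norm is at most $\norm{H_\gamma-H_\beta}_{L^\infty}$.
\end{enumerate}

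\textbf{Main obstacle.} The hard step is the bound $\norm{H_\gamma-H_\beta}_{L^\infty}\le c\norm{\gamma-\beta}_{W^{1,1}}$, because $(\mathcal G^\lambda)'$ has a jump at integers and so is not Lipschitz. I will take the argument $x=\rho(\upsilon)-\rho(u)\in(-1,1)$ without reducing mod $1$. On each of $(-1,0)$ and $(0,1)$ the function $(\mathcal G^\lambda)'$ is smooth, with Lipschitz constant $\sup|(\mathcal G^\lambda)''|\le C(\lambda)$.

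Because $\rho_\gamma$ and $\rho_\beta$ are nondecreasing, both $\rho_\gamma(\upsilon)-\rho_\gamma(u)$ and $\rho_\beta(\upsilon)-\rho_\beta(u)$ share the sign of $\upsilon-u$, or vanish. Where both are nonzero, the piecewise Lipschitz bound applies to the kernel difference. Where one vanishes, say $\rho_\gamma(\upsilon)=\rho_\gamma(u)$, the prefactor satisfies $|\gamma(\upsilon)-\gamma(u)|\le\length(\gamma)|\rho_\gamma(\upsilon)-\rho_\gamma(u)|=0$, so that term contributes nothing. This handles the jump. The other two telescoped pieces, namely the change in $\gamma(\upsilon)-\gamma(u)$ and the change of measure, are handled as in the $L^\infty$ part using $\sup|(\mathcal G^\lambda)'|<\infty$.

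Collecting the constants, which depend only on $\lambda$ and $\length(\gamma_0)$, gives \eqref{lip}.
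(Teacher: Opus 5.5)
Your proof is correct. Your $L^\infty$ part is essentially the paper's own argument: telescope, and use the global Lipschitz bound for $\mathcal G^\lambda$ together with Lipschitz control of $\rho$ and $\length$.

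The derivative part takes a different route. The paper never puts the derivative on the kernel. It combines \eqref{GIP} with \eqref{Finvariance} to write
\[
(F_\lambda(\gamma))'(u)=-\frac{1}{\lambda^2}\Big(\gamma'(u)+\frac{|\gamma'(u)|}{\length(\gamma)}\int_0^1\gamma'(\upsilon)\,\mathcal G^\lambda(\rho_\gamma(\upsilon)-\rho_\gamma(u))\,d\upsilon\Big).
\]
This is your formula after one integration by parts in $\upsilon$: the boundary terms cancel by periodicity, so the integral equals $-H_\gamma(u)$. In this form the inner integral is against plain $d\upsilon$, so there is no change-of-measure term inside it. The only kernel facts needed are boundedness of $\mathcal G^\lambda$ and the global bound $\sup|(\mathcal G^\lambda)'|=\frac{1}{2\lambda^2}$. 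The telescoping is then routine: bound the inner difference in $L^\infty$ in $u$, and control the outer weight $|\gamma'(u)|/\length(\gamma)$ in $L^1$.

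By keeping $\gamma(\upsilon)-\gamma(u)$ and differentiating the kernel, you create your own main obstacle, the jump of $(\mathcal G^\lambda)'$. Your fix via monotonicity of $\rho$ and vanishing on level sets of $\rho$ works, and it shows explicitly why pauses are harmless. It is, however, more delicate than the paper's route.

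Two details need tightening if you keep your route.

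\textbf{The vanishing case is not symmetric.} In the telescoped kernel term the prefactor and the weight come from different curves, for example $\int(\beta(\upsilon)-\beta(u))\big((\mathcal G^\lambda)'(x_\gamma)-(\mathcal G^\lambda)'(x_\beta)\big)\tfrac{|\gamma'(\upsilon)|}{\length(\gamma)}\,d\upsilon$, with $x_\gamma=\rho_\gamma(\upsilon)-\rho_\gamma(u)$ and $x_\beta$ defined likewise.
\begin{itemize}
\item Where $x_\gamma=0$, the term is killed by the weight $|\gamma'|/\length(\gamma)$, which vanishes a.e.\ on that level interval of $\rho_\gamma$. It is not killed by the prefactor.
\item Where $x_\beta=0$, the prefactor $\beta(\upsilon)-\beta(u)=0$ kills it.
\item Alternatively, you can bound a non-vanishing prefactor by $\length(\gamma)|x_\gamma-x_\beta|\le 2\length(\gamma)\norm{\rho_\gamma-\rho_\beta}_{L^\infty}$.
\end{itemize}

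\textbf{The points $x=\pm1$.} The argument $x$ is not confined to $(-1,1)$. If a curve pauses near $u=0$ and $u=1$, then $x=\pm1$ on sets of positive measure, and $(\mathcal G^\lambda)'$ jumps there as well. The same weight/prefactor argument disposes of these points, using $\beta(0)=\beta(1)$.
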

\begin{proof}
Let $\sigma_\gamma(u):=\int_0^u \abs{\gamma'(\tau)}d\tau $. For $\beta,\gamma\in W^{1,1}(\S,\R^2)$ we have 
\begin{equation}\label{lip1}
|\sigma_\gamma(u)-\sigma_\beta(u)|\leq \int_0^u\abs{\gamma'(\tau)-\beta'(\tau)}d\tau\leq \norm{\gamma-\beta}_{W^{1,1}}
\end{equation}
showing that $\sigma$ is Lipschitz, and with $u=1$ that $\length$ is Lipschitz. Therefore $\rho_\gamma=\frac{\sigma_\gamma}{\length(\gamma)}$ is Lipschitz away from $\length(\gamma)=0$, being a quotient of Lipschitz functions. Choosing $\varepsilon < \length(\gamma_0)$, so that for all $\gamma\in B_\varepsilon(\gamma_0)$
\begin{equation}\label{lip2}
 \abs{\length(\gamma)-\length(\gamma_0)}\leq \norm{\gamma'-\gamma_0'}_{L^1}\leq \norm{\gamma-\gamma_0}_{W^{1,1}}<\varepsilon   
\end{equation}
and therefore $\length(\gamma)>\length(\gamma_0)-\varepsilon>0$, we have that $\rho$ is Lipschitz on $B_\varepsilon(\gamma_0)$.
From \eqref{GIP} and \eqref{Finvariance}, recalling $\gamma'(u)=\hat\gamma'(\rho_\gamma(u))\frac{\abs{\gamma'(u)}}{\length(\gamma)}$
\begin{equation}\label{fullFdash}
(F_\lambda(\gamma))'(u)=(F_\lambda(\hat \gamma))'(\rho_\gamma (u))\frac{\abs{\gamma'(u)}}{\length(\gamma)}=-\frac{1}{\lambda^2}\left (\gamma'(u)+ \frac{\abs{\gamma'(u)}}{\length(\gamma)}\int_0^1  \gamma'(\upsilon) \mathcal{G}^\lambda (\rho_\gamma(\upsilon)-\rho_\gamma(u)) d \upsilon \right )
\end{equation}
Finally, from \eqref{Finvariance} and \eqref{fullFdash}, using Lipschitz estimates for $\mathcal G^\lambda$, $\rho$ and $\length$ and the usual telescoping sum of products, it follows that 
\begin{align*}
    \norm{F_\lambda (\gamma)-F_\lambda(\beta)}_{L^1} &\leq c \norm{\gamma-\beta}_{W^{1,1}}, \text{ and }\\
    \norm{F_\lambda (\gamma)'-F_\lambda(\beta)'}_{L^1}&\leq c \norm{\gamma-\beta}_{W^{1,1}}
\end{align*}
for any $\gamma,\beta \in B_\varepsilon(\gamma_0)$.
\end{proof}

It follows from Lemma \ref{bounds} and Lemma \ref{lipschitz} that $F_\lambda$ is continuous on all of $\wsr$. 
In order to extend the results of these lemmata to $F_{\lambda,a}$, we use the fact that $F_{\lambda,a}=\length^{a-2}F_\lambda$.

\begin{corollary}\label{Fa-bounds}
Fix $a\in\R$ and $\lambda>0$. For every $\gamma\in W^{1,1}(\S,\R^2)$ with $\length(\gamma)>0$,
\begin{align}\label{Fa-est1}
\norm{F_{\lambda,a}(\gamma)}_{L^\infty}&\leq \frac{1}{2\lambda^2}\length(\gamma)^{a-1},\\
\label{Fa-est2}
\norm{(F_{\lambda,a}(\gamma))'}_{L^1}&\leq \frac{2}{\lambda^2}\length(\gamma)^{a-1}.
\end{align}
Moreover, $F_{\lambda,a}$ is locally Lipschitz continuous on $W^{1,1}(\S,\R^2)$ away from $\{\gamma:\length(\gamma)=0\}$.
\end{corollary}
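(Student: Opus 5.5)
The plan is to reduce everything to Lemma~\ref{bounds} and Lemma~\ref{lipschitz} through the identity $F_{\lambda,a}=\length^{a-2}F_\lambda$. This identity holds pointwise in $u$ on $\{\length(\gamma)>0\}$, directly from the definitions \eqref{GF} and \eqref{GF2}.

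\textbf{The two bounds.} Since $\length(\gamma)^{a-2}$ is a positive constant in $u$, we have
\[
\norm{F_{\lambda,a}(\gamma)}_{L^\infty}=\length(\gamma)^{a-2}\norm{F_\lambda(\gamma)}_{L^\infty}
\quad\text{and}\quad
(F_{\lambda,a}(\gamma))'=\length(\gamma)^{a-2}(F_\lambda(\gamma))'.
\]
Inserting \eqref{Fest1} and \eqref{Fdest} gives \eqref{Fa-est1} and \eqref{Fa-est2} immediately.

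\textbf{Local Lipschitz continuity.} Fix $\gamma_0$ with $\length(\gamma_0)>0$. Take $\varepsilon<\length(\gamma_0)/2$, shrinking it if necessary so that Lemma~\ref{lipschitz} applies on $B_\varepsilon(\gamma_0)$ with constant $c$. By \eqref{lip2}, every $\gamma\in B_\varepsilon(\gamma_0)$ satisfies
\[
\tfrac12\length(\gamma_0)<\length(\gamma)<\tfrac32\length(\gamma_0).
\]
On this compact interval the map $s\mapsto s^{a-2}$ is bounded by some $M$ and Lipschitz with some constant $K$, for any $a\in\R$. Since $\length$ is $1$-Lipschitz with respect to the $W^{1,1}$ norm by \eqref{lip1}, the map $\gamma\mapsto\length(\gamma)^{a-2}$ is $K$-Lipschitz on the ball. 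For $\gamma,\beta$ in the ball we split as usual:
\[
F_{\lambda,a}(\gamma)-F_{\lambda,a}(\beta)=\length(\gamma)^{a-2}\big(F_\lambda(\gamma)-F_\lambda(\beta)\big)+\big(\length(\gamma)^{a-2}-\length(\beta)^{a-2}\big)F_\lambda(\beta).
\]
The first term has $W^{1,1}$ norm at most $Mc\norm{\gamma-\beta}_{W^{1,1}}$. For the second term, note that $\norm{F_\lambda(\beta)}_{W^{1,1}}$ is bounded on the ball by Lemma~\ref{bounds}: the $L^1$ norm is controlled by the $L^\infty$ norm, and both pieces are at most a constant times $\length(\beta)\le\tfrac32\length(\gamma_0)$. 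So the second term is at most $K C\norm{\gamma-\beta}_{W^{1,1}}$, where $C$ is that bound.

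There is no genuine obstacle. The only point needing care is keeping $\length$ bounded away from $0$ and $\infty$ on the ball, so that the factor $\length^{a-2}$ stays controlled when $a<2$ (where it blows up at $0$) and when $a>2$ (where it grows).
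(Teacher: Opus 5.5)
Your proposal is correct and follows the same route as the paper. You obtain the bounds by multiplying \eqref{Fest1}--\eqref{Fdest} by the constant $\length(\gamma)^{a-2}$, and you get local Lipschitz continuity by treating $F_{\lambda,a}=\length^{a-2}F_\lambda$ as a product of locally bounded, locally Lipschitz maps on a ball where the length stays in a compact subinterval of $(0,\infty)$. Your explicit split and the choice $\varepsilon<\length(\gamma_0)/2$ only spell out details the paper leaves implicit; the paper's $\varepsilon<\length(\gamma_0)$ already serves the same purpose.
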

\begin{proof}
The estimates \eqref{Fa-est1}--\eqref{Fa-est2} are immediate from Lemma~\ref{bounds} and $F_{\lambda,a}=\length^{a-2}F_\lambda$.
For Lipschitz continuity, let $\gamma_0,\varepsilon$ and $\gamma\in B_\varepsilon(\gamma_0)$ be as in Lemma~\ref{lipschitz}. Since $\gamma\mapsto \length(\gamma)$ is Lipschitz on $W^{1,1}$ (by \eqref{lip2}) and $y\mapsto y^{a-2}$ is Lipschitz away from $y=0$, the composition $\gamma\mapsto \length(\gamma)^{a-2}$ is Lipschitz on $B_\varepsilon(\gamma_0)$. The right-hand side of $F_{\lambda,a}=\length^{a-2}F_\lambda$ is therefore locally Lipschitz, being a product of two locally bounded, locally Lipschitz maps into $W^{1,1}$.
\end{proof}

Finally, we are in a position to prove local well-posedness of \eqref{GF} on $\wsr$.

\begin{proposition}\label{existence}
For each $X_0\in W^{1,1}(\S,\R^2)$ with $\length(X_0)>0$, each $a\in\R$ and each $\lambda>0$, there exists $T_0>0$ and a unique
\[
X\in C^1([-T_0,T_0],W^{1,1}(\S,\R^2))
\]
such that $X(0)=X_0$ and $X$ satisfies \eqref{GF}.
\end{proposition}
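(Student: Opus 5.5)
This is a direct application of the Picard--Lindel\"of theorem for ODEs in the Banach space $W^{1,1}(\S,\R^2)$, applied to the autonomous equation $\partial_t X = F_{\lambda,a}(X)$. By Corollary~\ref{Fa-bounds}, $F_{\lambda,a}$ is locally Lipschitz away from $\{\length=0\}$. Since $\length(X_0)>0$, we may fix $\varepsilon$ and $c$ as in Lemma~\ref{lipschitz}, with $\varepsilon<\length(X_0)$, and use the Lipschitz constant on the ball. To be safe we work on a slightly smaller closed ball $\overline{B}_{\varepsilon/2}(X_0)$, which lies inside $B_\varepsilon(X_0)$. On this ball every $\gamma$ satisfies $\length(\gamma)>\length(X_0)-\varepsilon/2>0$ by \eqref{lip2}, so the first branch of \eqref{GF} is in force and the Lipschitz estimate \eqref{lip} (with $F_{\lambda,a}$ in place of $F_\lambda$, constant $c_a$) holds.

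Next we need a uniform bound $M$ for $\norm{F_{\lambda,a}(\gamma)}_{W^{1,1}}$ on the ball. We get it from \eqref{Fa-est1} and \eqref{Fa-est2}, using that $\length(\gamma)$ lies in the interval $[\length(X_0)-\varepsilon/2,\length(X_0)+\varepsilon/2]$, which avoids $0$. This gives
\[
\norm{F_{\lambda,a}(\gamma)}_{W^{1,1}}\le \Big(\tfrac{1}{2\lambda^2}+\tfrac{2}{\lambda^2}\Big)\max\big\{(\length(X_0)\pm\tfrac{\varepsilon}{2})^{a-1}\big\}=:M,
\]
where the maximum over the two endpoints handles either sign of $a-1$. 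We then set $T_0:=\min\{\varepsilon/(2M),\,1/(2c_a)\}$ and consider the Picard map
\[
\Phi(X)(t):=X_0+\int_0^t F_{\lambda,a}(X(s))\,ds
\]
on the complete metric space $C([-T_0,T_0],\overline{B}_{\varepsilon/2}(X_0))$ with the sup norm. The integral is a Bochner (Riemann) integral of a continuous $W^{1,1}$-valued function. The bound $M$ shows that $\Phi$ maps the space into itself, and the Lipschitz constant shows that $\Phi$ is a contraction with factor $\tfrac12$. Banach's fixed point theorem gives a unique fixed point $X$. Since $s\mapsto F_{\lambda,a}(X(s))$ is continuous, the fundamental theorem of calculus for Banach-valued integrals gives $X\in C^1([-T_0,T_0],W^{1,1})$ with $\partial_tX=F_{\lambda,a}(X)$.

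The main point requiring care is uniqueness among \emph{all} $C^1$ solutions, not just those confined to the ball. Given another solution $Y$ with $Y(0)=X_0$, continuity keeps $Y$ in $B_{\varepsilon/2}(X_0)$ on some interval $[-\delta,\delta]$. On that interval a Gr\"onwall argument with the constant $c_a$ gives $Y=X$. The set where $Y=X$ is closed, and the same argument applied at any point of agreement (whose length is positive, since it lies in the ball) shows it is open in $[-T_0,T_0]$, so $Y=X$ on the whole interval. A secondary subtlety is the discontinuity of $F_{\lambda,a}$ at zero length, particularly for $a<2$. Our choice of ball avoids it entirely, so the second branch of \eqref{GF} never enters the argument.
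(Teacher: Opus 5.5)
Your proposal is correct and follows the paper's proof. Both use Corollary~\ref{Fa-bounds} to get local Lipschitz continuity of $F_{\lambda,a}$ and a uniform $W^{1,1}$ bound of the form $\frac{5}{2\lambda^2}\sup_r r^{a-1}$ on a ball around $X_0$ where the length stays positive, and then apply Picard--Lindel\"of. The only difference is that you unpack the theorem yourself, via a contraction on $C([-T_0,T_0],\overline{B}_{\varepsilon/2}(X_0))$ and an open--closed Gr\"onwall argument for uniqueness among all $C^1$ solutions, whereas the paper simply cites it from Zeidler.
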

\begin{proof}
By Corollary~\ref{Fa-bounds} there exist $0<\varepsilon<\length(X_0)$ and $c>0$ such that
\[
\norm{F_{\lambda,a}(\gamma)-F_{\lambda,a}(\beta)}_{W^{1,1}}
\leq c\norm{\gamma-\beta}_{W^{1,1}}
\]
for all $\gamma,\beta\in B_\varepsilon(X_0)$. By \eqref{lip2} every $\gamma\in B_\varepsilon(X_0)$ satisfies
$\length(\gamma)\in I:=[\length(X_0)-\varepsilon,\length(X_0)+\varepsilon]
$
so \eqref{Fa-est1}--\eqref{Fa-est2} give
\[
\norm{F_{\lambda,a}(\gamma)}_{W^{1,1}}
\leq \norm{F_{\lambda,a}(\gamma)}_{L^\infty}+\norm{(F_{\lambda,a}(\gamma))'}_{L^1}\leq \frac{5}{2\lambda^2} \length(\gamma)^{a-1}
\leq \frac{5}{2\lambda^2}\sup_{r\in I}r^{a-1}<\infty
\]
for all $\gamma\in B_\varepsilon(X_0)$.
Hence $F_{\lambda,a}$ is locally Lipschitz and locally bounded on $W^{1,1}$ near $X_0$, and the Picard--Lindel\"of theorem (see e.g.\ \cite{Zeidler:1986aa}, Theorem 3.A) yields $T_0>0$ and a unique solution
\[
X\in C^1([-T_0,T_0],W^{1,1}(\S,\R^2))
\]
to \eqref{GF} with $X(0)=X_0$.
\end{proof}

\section{Global existence and convergence}
\label{sec:conv}

We now turn to the long-time behaviour of solutions. Beginning again with the case $a=2$ we demonstrate: global existence and uniqueness of the stationary constant solutions (Lemma~\ref{wellposedconstants}), monotonicity of length and sup-norm (Lemma~\ref{apriori}), a lower bound on length precluding finite-time collapse (Proposition~\ref{globalexistence}), exponential length decay (Lemma~\ref{lengthdecay}), and convergence to constant maps (Proposition~\ref{longtimebehaviour}). Having established global existence and convergence with $a=2$ we show in Proposition \ref{timescaling} that solutions to \eqref{GF2} and \eqref{GF} differ only by a time reparametrisation, and that this reparametrisation maps to a finite time interval when $a<2$, completing the proof of Theorem \ref{thm:main1}.

\begin{remark}
Proposition \ref{existence} gives solutions to \eqref{GF} as curves $t\mapsto X(t)$ taking values in $\wsr$, but in \eqref{GF} $X$ is a function of two variables $(u,t)$. Reconciling the derivatives of $X$ from these two viewpoints requires the isometric isomorphism 
\[  \iota:L^1([0,T],L^1(\S,\R^2))\cong L^1(\S\times [0,T],\R^2)\] defined by $\iota X(u,t)=X(t)(u)$ (see e.g. \cite{hytonen2016analysis} Proposition 1.2.24). As usual, we identify elements of $\wsr$ with their absolutely continuous representatives, so that given $X\in C^1([0,T],\wsr)$, $\partial_u \iota X(u,t)$ exists a.e. in $\S\times [0,T]$ and is equal to $\iota\partial X(u,t)$ in $L^1(\S\times [0,T],\R^2)$, where $\partial : \wsr \to L^1(\S,\R)$ denotes the weak spatial derivative. Moreover, writing $D:C^1([0,T],\wsr)\to C^0([0,T],\wsr)$ for the time derivative, we also have that 
\[
\iota D\partial X=\partial_t\partial_u\iota X=\partial_u\partial_t\iota X=\iota \partial DX \quad \text{in } L^1(\S\times [0,T],\R^2).
\]
From now on we omit $\iota$ and make use of the above identifications without mention; this is the sense in which the expressions below involving $\partial_u X(u,t)$ should be interpreted.

\end{remark}


\begin{lemma}\label{wellposedconstants}
For each $X_0\in W^{1,1}(\S,\R^2)$ with $\length(X_0)=0$, each $a\in\R$ and each $\lambda>0$, the unique $X\in C^1([0,\infty),W^{1,1}(\S,\R^2))$ satisfying \eqref{GF2} and $X(\cdot,0)= X_0$ is the stationary solution $X(\cdot, t)\equiv X_0$.
\end{lemma}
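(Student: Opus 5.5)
Since \eqref{GF2} does not involve $a$, the parameter $a$ plays no role: I will prove the claim for \eqref{GF2} with arbitrary fixed $\lambda>0$, and the conclusion then holds verbatim for every $a\in\R$. The proof has two parts, existence and uniqueness. For existence, note that $\length(X_0)=0$ forces $X_0'=0$ a.e., so $X_0$ is a constant map. By definition $F_\lambda(X_0)=0$, so the constant-in-time path $X(t)\equiv X_0$ lies in $C^1([0,\infty),\wsr)$, has zero time derivative, and satisfies \eqref{GF2}.

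For uniqueness, Proposition~\ref{existence} does not apply, since $F_\lambda$ is not known to be Lipschitz near constant maps. Instead I will use a one-sided Lipschitz bound at $X_0$ obtained from Lemma~\ref{bounds}. For any $\gamma\in\wsr$, \eqref{Fest1} and \eqref{Fdest} together with $\norm{\cdot}_{L^1}\le\norm{\cdot}_{L^\infty}$ on $\S=[0,1]/{\sim}$ give
\[
\norm{F_\lambda(\gamma)}_{W^{1,1}}\le \frac{5}{2\lambda^2}\length(\gamma).
\]
This also holds trivially when $\length(\gamma)=0$. By \eqref{lip2} and $\length(X_0)=0$ we have
\[
\length(\gamma)=\abs{\length(\gamma)-\length(X_0)}\le\norm{\gamma-X_0}_{W^{1,1}}.
\]
Since $F_\lambda(X_0)=0$, combining these yields
\[
\norm{F_\lambda(\gamma)-F_\lambda(X_0)}_{W^{1,1}}\le C\norm{\gamma-X_0}_{W^{1,1}},\qquad C=\tfrac{5}{2\lambda^2},
\]
for all $\gamma$.

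Now let $X\in C^1([0,\infty),\wsr)$ be any solution of \eqref{GF2} with $X(0)=X_0$. Integrating in time in the Banach space $\wsr$ gives, for every $t\ge0$,
\[
\norm{X(t)-X_0}_{W^{1,1}}\le\int_0^t\norm{F_\lambda(X(s))}_{W^{1,1}}\,ds\le C\int_0^t\norm{X(s)-X_0}_{W^{1,1}}\,ds.
\]
The function $s\mapsto\norm{X(s)-X_0}_{W^{1,1}}$ is continuous and nonnegative and vanishes at $s=0$. Gronwall's inequality therefore forces it to be identically zero, so $X(t)\equiv X_0$.

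The only point requiring care, and the main obstacle, is that local Lipschitz continuity of $F_\lambda$ is unavailable at constant maps. It is replaced by the linear-in-length bound from Lemma~\ref{bounds}, which suffices for Gronwall because we only compare against the fixed point $X_0$. One should also check that the estimates of Lemma~\ref{bounds} hold for non-immersed $\gamma$ of small positive length, which they do as stated for all $\gamma\in\wsr$.
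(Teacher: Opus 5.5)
Your proof is correct and is essentially the paper's argument: both rest on Lemma~\ref{bounds} bounding $F_\lambda$ linearly in $\length$, followed by Gr\"onwall. The only difference is where Gr\"onwall is applied. The paper applies it to $\ell(t)=\length(X(\cdot,t))$ using only \eqref{Fdest}, and then reads off $\partial_t X=0$ from \eqref{GF2}. You apply it directly to $\norm{X(t)-X_0}_{W^{1,1}}$ via $\length(\gamma)\le\norm{\gamma-X_0}_{W^{1,1}}$, which gives $X(t)\equiv X_0$ in one step.
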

\begin{proof}
If $X(\cdot,t)\equiv X_0$ then $\partial_t X=0$ and $\length(X(\cdot,t))=0$ for all $t$, so $X$ satisfies \eqref{GF}. To see that this solution is unique, let $X\in C^1([0,\infty),W^{1,1}(\S,\R^2))$ be any solution to \eqref{GF} with $X(0)=X_0$ constant, and set 
$\ell(t):=\length(X(\cdot,t))=\norm{X_u(\cdot,t)}_{L^1}$.
Since $t\mapsto X_u(\cdot,t)$ is $C^1$ as an $L^1$-valued map 
\[
\abs{\ell(t)-\ell(0)}\leq \norm{X_u(\cdot,t)-X_u(\cdot, 0)}_{L^1}\leq \int_0^t\norm{\partial_\tau X_u(\cdot, \tau)}_{L^1}d\tau \leq \int_0^t \frac{2}{\lambda^2}\ell(\tau)
\]
by Lemma~\ref{bounds}. As $\ell(0)=0$, Gr\"onwall's inequality gives $\ell(t)\equiv 0$, i.e. $X(\cdot,t)$ is constant in $u$ for every $t$. Then \eqref{GF2} implies $\partial_tX=0$ and therefore $X(\cdot,t)\equiv X_0$.
\end{proof}

\begin{lemma}\label{apriori}
If $X\in C^1([-T_0,T_0],W^{1,1}(\S,\R^2))$ is a solution to \eqref{GF2}, then $\norm{X(\cdot, t)}_{L^\infty}$ and $\length(X(\cdot,t))$ are non-increasing. 
\end{lemma}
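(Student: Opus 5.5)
The idea is to show that $F_\lambda(X)$ is a convex-combination-type contraction toward an average of $X$, for both the sup-norm and the length. If $\length(X(\cdot,t_0))=0$ at some time, Lemma~\ref{wellposedconstants} shows the solution is stationary from then on, so we may assume positive length on the interval of interest.

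\emph{Sup-norm.} Using \eqref{curlygintegral}, write \eqref{GF2} as
\[
\partial_t X(u,t)=-\frac{1}{\lambda^2}\big(X(u,t)-M(u,t)\big),\qquad M(u,t):=\int_0^1 X(\upsilon,t)\,\abs{\mathcal G^\lambda(\rho_X(\upsilon)-\rho_X(u))}\,\hat d\upsilon .
\]
Since $\abs{\G}\hat d\upsilon$ has total mass one, $M(u,t)$ is a weighted average of $X(\cdot,t)$, so $\abs{M(u,t)}\le\norm{X(\cdot,t)}_{L^\infty}$. For fixed $u$, the map $t\mapsto X(u,t)$ is $C^1$, because $W^{1,1}\hookrightarrow C^0$ and evaluation is continuous. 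Then
\[
\partial_t \tfrac12\abs{X(u,t)}^2=-\tfrac1{\lambda^2}\big(\abs{X}^2-\ip{X,M}\big)\le -\tfrac1{\lambda^2}\abs{X}\big(\abs{X}-\norm{X(\cdot,t)}_{L^\infty}\big).
\]
This is nonpositive wherever $\abs{X(u,t)}$ attains the sup. To turn this into monotonicity of $m(t):=\norm{X(\cdot,t)}_{L^\infty}$, I would use a comparison argument. Alternatively, the linear ODE $\partial_t X=-\lambda^{-2}X+\lambda^{-2}M$ gives the Duhamel formula
\[
X(u,t)=e^{-(t-s)/\lambda^2}X(u,s)+\lambda^{-2}\int_s^t e^{-(t-\tau)/\lambda^2}M(u,\tau)\,d\tau .
\]
From it, $m(t)\le e^{-(t-s)/\lambda^2}m(s)+(1-e^{-(t-s)/\lambda^2})\sup_{[s,t]}m$. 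If $m$ increased somewhere on $[s,t]$, take $\tau^*$ to be a maximiser of $m$ on $[s,t]$ with $m(\tau^*)>m(s)$. Applying the inequality on $[s,\tau^*]$ gives $m(\tau^*)\le e^{-c}m(s)+(1-e^{-c})m(\tau^*)$, hence $m(\tau^*)\le m(s)$, a contradiction. Continuity of $m$ follows from $X\in C^0([-T_0,T_0],W^{1,1})$.

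\emph{Length.} Use \eqref{fullFdash}. Differentiating $\ell(t)=\int_0^1\abs{X_u}\,du$ (justified via the remark on $\iota$, since $X_u\in C^1$ with values in $L^1$) and using the subgradient of $\abs{\cdot}$ on the pause set gives
\[
\ell'(t)\le -\frac1{\lambda^2}\int_0^1\Big(\abs{X_u}+\frac{\abs{X_u(u)}}{\ell}\Big\langle\frac{X_u(u)}{\abs{X_u(u)}},\int_0^1X_u(\upsilon)\G(\rho(\upsilon)-\rho(u))\,d\upsilon\Big\rangle\Big)du .
\]
The inner term is bounded by $\frac{\abs{X_u(u)}}{\ell}\int_0^1\abs{X_u(\upsilon)}\abs{\G}\,d\upsilon$. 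After integrating in $u$ and using Fubini together with $\int\abs{\G(\rho(\upsilon)-\rho(u))}\,\hat du=1$, this gives $\int_0^1\abs{X_u(\upsilon)}\,d\upsilon=\ell$. Hence $\ell'\le\lambda^{-2}(-\ell+\ell)=0$. The derivative computation may be shaky where $X_u=0$, so I would instead work with the integrated form: estimate $\ell(t)-\ell(s)\le\int_0^1\big(\abs{X_u(t)}-\abs{X_u(s)}\big)\,du$, and use the same Duhamel representation for $X_u$ in the variable $u$, namely
\[
X_u(u,t)=e^{-(t-s)/\lambda^2}X_u(u,s)+\lambda^{-2}\int_s^t e^{-(t-\tau)/\lambda^2}N(u,\tau)\,d\tau,
\]
where $\norm{N(\cdot,\tau)}_{L^1}\le\ell(\tau)$ by the Fubini bound above. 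Then the same maximiser argument as for $m$ applies to $\ell$.

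The main obstacle is rigor at low regularity: differentiating sup-norms and $L^1$ norms in time. The Duhamel/maximiser trick avoids differentiating nonsmooth functionals, so I would rely on it in both parts.
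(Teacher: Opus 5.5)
Your proof is correct, but it takes a different route from the paper.

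\textbf{The paper's route.} For the sup-norm, the paper notes that $V(t)=\norm{X(\cdot,t)}_{L^\infty}$ is Lipschitz. It then uses Danskin's theorem to write $V'(t)$ a.e.\ as a maximum of $\partial_t\abs{X(u^*,t)}$ over the set of maximisers, and gets $V'\le 0$ from the same inequality as your first display. The case $V(t^*)=0$ is handled separately through Lemma~\ref{wellposedconstants}. For length, the paper proves the pointwise inequality $\partial_t\abs{X_u(u,t)}^2\le 0$ for a.e.\ $u$. It does so by bounding the convolution term in \eqref{fullFdash} with $\norm{\hat X_\tau}_{L^\infty}=\length(X)$, which comes from the constant-speed reparametrisation of Appendix~\ref{appendixA}. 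This gives the stronger fact that the speed $\abs{X_u(u,t)}$ itself is non-increasing, and monotonicity of $\length$ follows by integrating in $u$.

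\textbf{Your route.} The variation-of-constants formula plus the maximiser (comparison) argument treats both quantities by one mechanism. Each of $X(u,\cdot)$ and $X_u(u,\cdot)$ satisfies a linear ODE $\partial_t Q=-\lambda^{-2}(Q-A)$, and the forcing $A$ is controlled by $Q$ itself through the unit mass of $\abs{\G}$. You never differentiate a nonsmooth functional. So you need neither Danskin's theorem nor the separate case $V=0$, and the argument works unchanged on the two-sided interval $[-T_0,T_0]$.

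\textbf{What the Fubini bound loses, and how to recover it.} Your bound $\norm{N(\cdot,\tau)}_{L^1}\le\ell(\tau)$ gives up the pointwise monotonicity of the speed, but you can recover it at no extra cost. Since $\abs{X_u(\upsilon)}\,d\upsilon=\length(X)\,\hat d\upsilon$, the same unit-mass identity gives
$\int_0^1\abs{X_u(\upsilon)}\,\abs{\G(\rho_X(\upsilon)-\rho_X(u))}\,d\upsilon=\length(X)$.
Hence $\abs{N(u,\tau)}\le\abs{X_u(u,\tau)}$ pointwise. Your Duhamel/maximiser argument then runs for a.e.\ fixed $u$ and yields the paper's pointwise statement, still without needing Appendix~\ref{appendixA}.
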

\begin{proof} 
With $V(t):= \norm{X(\cdot,t)}_{L^\infty}$:
\begin{align*}
|V(t_2)-V(t_1)|&\leq \sup_{u\in \S}\left | X(u,t_2)-X(u,t_1)\right |=\sup_{u\in \S}\left | \int_{t_1}^{t_2} \partial_t X(u,\tau) d\tau \right | \\
&\leq \int_{t_1}^{t_2}\sup_{u\in \S}|\partial_t X(u,\tau)|d\tau 
\end{align*}
Then since $X$ is $C^1$, $V$ is Lipschitz and therefore differentiable almost everywhere. 
By Danskin's theorem \cite{danskin1966theory}, where it exists $V'(t)$ satisfies
\[ V'(t) = \max_{u^*\in U^*} \partial_t |X(u^*,t)|, \quad \text{where}\quad  U^*(t):=\{ u\in \S: |X(u,t)|=V(t)\}.
\]
For any $u\in \S$, using \eqref{curlygintegral}
\begin{align*}
\partial_t |X(u,t)|^2 & = - \frac{2}{\lambda^2} \langle X(u,t), X(u,t)+ (X\ast_X \mathcal G^\lambda)(u,t) \rangle \\
& = -\frac{2}{\lambda^2} |X(u,t)|^2-\frac{2}{\lambda^2}\Big \langle X(u,t),\int X(\upsilon,t)\mathcal G^\lambda(\rho_X(\upsilon)-\rho_X(u)) \hat d \upsilon \Big \rangle  \\
& \leq  -\frac{2}{\lambda^2} |X(u,t)|^2 +\frac{2}{\lambda^2}|X(u,t)|\norm{X(t)}_{L^\infty}
\end{align*}
and then for all $u^*\in U^*(t)$ we have $\partial_t |X(u^*,t)|^2\leq 0$.  Then assuming $|X(u^*,t)|>0$, we have
\[V'(t)=\max_{u^*\in U^*} \partial_t |X(u^*,t)| \leq 0 \,\, \text{a.e.} \]
On the other hand, if there is a $t^*$ such that $|X(u^*,t^*)|=0$, then $|X(u,t^*)|=0$ for all $u\in \S$, $\length(X(\cdot,t^*))=0$, and by Lemma \ref{wellposedconstants} $\norm{X(\cdot, t)}_{L^\infty}=0$ and $\length(X(\cdot,t))=0$ for all $t\geq t^*$.

Recall that for a.e. $u\in S$, the map $t\mapsto X_u(u,t)$ is absolutely continuous,  then so is $t\mapsto |X_u(u,t)|^2$, and $\partial_t|X_u(u,t)|^2=2\langle X_u(u,t),X_{ut}(u,t)\rangle $ a.e. in $t$. Hence, while $\length(X(\cdot,t))>0$, recalling \eqref{GIP} and \eqref{Finvariance} we have 
\begin{align*}
\partial_t |X_u(u,t)|^2
& = -\frac{2}{\lambda^2} |X_u(u,t)|^2 -\frac{2}{\lambda^2}\frac{\abs{X_u(u,t)}}{\length(X)}\Big \langle X_u(u,t),\int \hat X_\tau(\tau,t)\mathcal G^\lambda(\tau-\rho_X(u)) d \tau \Big \rangle\\
& \leq -\frac{2}{\lambda^2} |X_u(u,t)|^2+\frac{2}{\lambda^2}\frac{\abs{X_u(u,t)}^2}{\length(X)}{\Vert\hat X_\tau(\cdot, t)\Vert}_{L^\infty}
\end{align*}
where $\hat X(\tau,t):=X(\varphi_X(\tau),t)$ satisfies $|\hat X_\tau(\tau, t)|=\length(X)$, and therefore $\partial_t |X_u(u,t)|^2 \leq 0 $. Then $|X_u(u,t_2)|^2\leq |X_u(u,t_1)|^2$ whenever $0\leq t_1\leq t_2$, hence $|X_u(u,t_2)|\leq |X_u(u,t_1)|$ and also $\length(X(t_2))\leq \length(X(t_1))$.
\end{proof}

The previous lemma establishes that solutions remain bounded in $\wsr$. To prove global existence it remains to rule out the possibility that length decays to zero in finite time, which would push the solution out of the region where $F_\lambda$ is Lipschitz.

\begin{proposition}\label{globalexistence}
For every $X_0\in W^{1,1}(\S,\R^2)$ there exists a unique global solution
$X\in C^1([0,\infty),W^{1,1}(\S,\R^2))$ to \eqref{GF2} with $X(\cdot,0)=X_0$.
\end{proposition}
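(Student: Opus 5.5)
We split according to the initial length. If $\length(X_0)=0$, Lemma~\ref{wellposedconstants} already gives the unique global solution, namely the stationary one. So assume $\length(X_0)>0$. Proposition~\ref{existence} gives a local solution, and uniqueness on overlaps follows from the local Lipschitz property (Lemma~\ref{lipschitz}) and Gr\"onwall. Hence there is a maximal positive-length solution on $[0,T^*)$. We argue by contradiction, supposing $T^*<\infty$.

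The key step is a lower bound on length that rules out collapse in finite time. Set $\ell(t):=\length(X(\cdot,t))$. From \eqref{Fdashest} we have pointwise $|\partial_t X_u(u,t)|\le \frac{2}{\lambda^2}|X_u(u,t)|$ a.e. Since $t\mapsto X_u(u,t)$ is absolutely continuous for a.e.\ $u$, the function $g(t):=|X_u(u,t)|$ satisfies $|g'|\le \frac{2}{\lambda^2}g$ a.e. Gr\"onwall applied backward in time then gives
\[
|X_u(u,t)|\ge e^{-2t/\lambda^2}|X_u(u,0)|.
\]
Integrating in $u$ yields $\ell(t)\ge e^{-2t/\lambda^2}\length(X_0)$.

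Alternatively, one can avoid pointwise arguments. Lemma~\ref{bounds} gives $\ell(s)-\ell(t)\le \int_s^t\norm{\partial_\tau X_u}_{L^1}\,d\tau\le \frac{2}{\lambda^2}\int_s^t\ell$, which bounds the rate of decrease of $\ell$. Combined with Lemma~\ref{apriori}, this shows that $\ell$ is Lipschitz with $\ell'\ge -\frac{2}{\lambda^2}\ell$ a.e., and the same exponential lower bound follows. In either form, $\ell(t)\ge \ell_*:=e^{-2T^*/\lambda^2}\length(X_0)>0$ on $[0,T^*)$.

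Next we show that the solution has a limit at $T^*$. By Corollary~\ref{Fa-bounds} with $a=2$ (or Lemma~\ref{bounds}), $\norm{\partial_tX}_{W^{1,1}}\le \frac{5}{2\lambda^2}\ell(t)\le\frac{5}{2\lambda^2}\length(X_0)$. Thus $t\mapsto X(t)$ is Lipschitz into $W^{1,1}$, so it is Cauchy as $t\uparrow T^*$ and converges to some $X_{T^*}\in W^{1,1}$. By continuity of $\length$, $\length(X_{T^*})\ge\ell_*>0$. Proposition~\ref{existence} applied at $X_{T^*}$ extends the solution past $T^*$. The glued function is $C^1$ because $F_\lambda$ is continuous, so $\partial_tX(t)=F_\lambda(X(t))\to F_\lambda(X_{T^*})$. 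This contradicts maximality, so $T^*=\infty$.

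Uniqueness among all $C^1([0,\infty),W^{1,1})$ solutions follows the same way. Any solution keeps positive length for all time by the lower bound, which applies to any solution. The set of times where two solutions agree is then closed, and it is open by the local uniqueness from Proposition~\ref{existence}. The main obstacle is the rigorous justification of the differential inequality for $\ell$ (or for $|X_u|$) given only $C^1$-in-time regularity into $W^{1,1}$. I would handle it through the integral form above via the remark identifying $\partial_t\partial_uX$, which avoids differentiating $\ell$ directly.
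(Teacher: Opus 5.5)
Your proposal is correct and is essentially the paper's proof: a uniform Lipschitz-in-time bound in $W^{1,1}$ from Lemma~\ref{bounds} and Lemma~\ref{apriori} gives a limit at the maximal time, and the Gr\"onwall bound $\ell(t)\ge \ell(0)e^{-2t/\lambda^2}$ keeps the length of that limit positive. The paper derives this bound exactly through your integral version, $\ell'\ge -\norm{X_{ut}}_{L^1}\ge -\frac{2}{\lambda^2}\ell$, and then restarts Proposition~\ref{existence} at the limit to contradict maximality. Your pointwise variant (as in Lemma~\ref{PRimmersed}) and your explicit open--closed uniqueness argument are harmless additions that the paper leaves implicit.
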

\begin{proof}

By Lemma \ref{wellposedconstants} it suffices to consider $\length(X_0)>0$.
Let
$
X\in C^1([0,T_{\max}),W^{1,1}(\S,\R^2))
$
be the positive-time-maximal solution given by Proposition~\ref{existence}.
By Lemma~\ref{bounds} and \ref{apriori},
\[
\norm{\partial_t X(\cdot,t)}_{W^{1,1}}
=\norm{F_\lambda(X(\cdot,t))}_{W^{1,1}}
\le \frac{5}{2\lambda^2}\length(X(\cdot,0))
\]
and so for any $t_1\leq t_2\leq T_{\max}$ we have
\[
\norm{X(\cdot,t_2)-X(\cdot,t_1)}_{W^{1,1}}\leq \Big\Vert \int_{t_1}^{t_2}\partial_t X(\cdot,t)dt\Big\Vert_{W^{1,1}}\leq |t_2-t_1|\frac{5}{2\lambda^2}\length(X(\cdot,0)).
\]
Therefore if $T_{\max}<\infty$ there exists $X_*\in W^{1,1}(\S,\R^2)$ such that
$X(\cdot,t)\to X_*$
in $W^{1,1}(\S,\R^2)$
as $t\uparrow T_{\max}.$

To apply Proposition~\ref{existence} with $X_*$ as initial value we require $\length(X_*)>0$.
Set $\ell(t):=\length(X(\cdot,t))=\norm{X'(\cdot,t)}_{L^1}$. 
Since $t\mapsto X'(\cdot,t)$ is $C^1$ as an $L^1$-valued map, we have
\[
|\ell(t_2)-\ell(t_1)|\leq \norm{X_u(\cdot,t_2)-X_u(\cdot,t_1)}_{L^1}\leq \int_{t_1}^{t_2}\norm{X_{ut}(\cdot, \tau)}_{L^1}d\tau
\]
hence $\ell$ is absolutely continuous and for a.e.\ $t<T_{\max}$,
\[
\ell'(t)\ge -\norm{X_{ut}(\cdot,t)}_{L^1}
=-\norm{F_\lambda(X(\cdot,t))'}_{L^1}
\ge -\frac{2}{\lambda^2}\ell(t),
\]
where we have used \eqref{Fa-est2}. Gr\"onwall's inequality then yields
\begin{equation}\label{lengthlowerbound}
\ell(t)\ge \ell(0)e^{-2t/\lambda^2}>0
\qquad\text{for all }t<T_{\max}.
\end{equation}
and by continuity of $\length$,
\[
\length(X_*)=\lim_{t\uparrow T_{\max}}\ell(t)\ge \ell(0)e^{-2T_{\max}/\lambda^2}>0.
\]
Now Proposition~\ref{existence} with $X_*$ as initial value gives a unique continuation of $X$ beyond time $T_{\max}$, contradicting the assumption of maximality. Therefore $T_{\max}=\infty$.
\end{proof}

If $X$ is immersed then $\length(X)$ is differentiable and we can do better than just monotonicity, but first we need to show that if the initial data is immersed then this property is preserved along the flow. 

\begin{lemma}
\label{PRimmersed}
    If $X_0\in \wimm$ then the solution $X$ to \eqref{GF2} with $X(0)=X_0$ remains immersed, i.e.  $X\in C^1([0,T_0],\wimm)$. 
\end{lemma}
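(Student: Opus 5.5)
Since $\wimm$ consists of maps with $|X_u|>0$ a.e., we need to show that $|X_u(u,t)|$ cannot vanish at time $t$ unless it already vanished at time $0$. The plan is to obtain a pointwise \emph{lower} bound on $|X_u(u,t)|^2$. This complements the pointwise upper bound (monotonicity) from the proof of Lemma~\ref{apriori}.

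By Proposition~\ref{globalexistence}, $\length(X(\cdot,t))>0$ for all $t$. Moreover, as noted in the remark preceding Lemma~\ref{wellposedconstants}, for a.e.\ $u$ the map $t\mapsto X_u(u,t)$ is absolutely continuous, with
\[
\partial_t |X_u(u,t)|^2=2\ip{X_u(u,t),X_{ut}(u,t)}.
\]
We substitute the expression \eqref{fullFdash} for $X_{ut}=(F_\lambda(X))'$:
\[
X_{ut}(u,t)=-\frac{1}{\lambda^2}\Big(X_u(u,t)+\frac{|X_u(u,t)|}{\length(X)}\int_0^1 \hat X_\tau(\tau,t)\,\mathcal G^\lambda(\tau-\rho_X(u))\,d\tau\Big),
\]
where $|\hat X_\tau|=\length(X)$. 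By \eqref{curlygintegral}, the integral has norm at most $\length(X)$. Hence, by Cauchy--Schwarz,
\[
\partial_t|X_u(u,t)|^2\ \geq\ -\frac{2}{\lambda^2}|X_u|^2-\frac{2}{\lambda^2}|X_u|^2=-\frac{4}{\lambda^2}|X_u(u,t)|^2 .
\]
Gr\"onwall's inequality, applied for a.e.\ fixed $u$, then gives $|X_u(u,t)|^2\ge |X_u(u,0)|^2e^{-4t/\lambda^2}$. This bound also holds for negative times in $[-T_0,0]$, using the symmetric upper bound on $|\partial_t|X_u|^2|$. Since $|X_0'(u)|>0$ for a.e.\ $u$, we conclude that $|X_u(u,t)|>0$ for a.e.\ $u$ and every $t$, so $X(t)\in\wimm$.

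The main obstacle is measure-theoretic bookkeeping. We need the exceptional null set of $u$ to be independent of $t$: Gr\"onwall is applied along the absolutely continuous trajectories $t\mapsto X_u(u,t)$ for $u$ outside a single null set, and this null set must be chosen via the identification $\iota$, fixing a representative of $X_u$ jointly measurable in $(u,t)$ and using Fubini. Once that is arranged, each $X(t)$ lies in $\wimm$. The $C^1$ regularity of $t\mapsto X(t)$ is inherited from the solution in $\wsr$, since $\wimm$ carries the subspace topology.
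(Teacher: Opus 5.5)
Your proposal is correct and is essentially the paper's argument. Both bound the convolution term in $X_{ut}$ by $|X_u|$, using $|\hat X_\tau|=\length(X)$ and $\int_0^1|\mathcal G^\lambda|=1$, which gives $\partial_t|X_u|^2\geq -\frac{4}{\lambda^2}|X_u|^2$; Gr\"onwall then yields $|X_u(u,t)|^2\geq |X_u(u,0)|^2e^{-4t/\lambda^2}$. Your choice of a single $t$-independent exceptional null set, via a jointly measurable representative of $X_u$, is a detail the paper leaves implicit, and you handle it correctly.
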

\begin{proof}
From \eqref{GF2}
\[
\frac{1}{2} \frac{d}{dt}|X_u|^2=\ip{X_u,X_{tu}}=-\frac{1}{\lambda^2}|X_u|^2-\frac{1}{\lambda^2}\ip{X_u,\partial_u(X\ast_X\mathcal{G}^\lambda)},
\]
and arguing as in the proof of Lemma \ref{bounds} we estimate $\big|\partial_u(X\ast_X\mathcal G^\lambda)(u,t)\big|\leq |X_u(u,t)| $. 
Hence
\[
\frac{d}{dt}|X_u(u,t)|^2\geq-\frac{4}{\lambda^2}|X_u(u,t)|^2,
\]
from which it follows that
\begin{equation}\label{immersionbound}
 |X_u(u,t)|^2\geq |X_u(u,0)|^2e^{-4t/\lambda^2}.   
\end{equation}
Then because $X_0$ is immersed, $|X_u(u,t)|>0$ for all $t$.
\end{proof}

\begin{lemma} \label{lengthdecay}
Let $X$ be a solution to \eqref{GF2} with $X_0$ immersed. Abbreviate $\mathcal L (t)=\mathcal L(X(\cdot,t))$ and $\mathcal L_0=\mathcal L(0)$, then
\begin{align}\label{eq:ldecay}
\length(t)\leq \length_0 e^{-4t/(1+8\lambda^2)}
\,.
\end{align}
\end{lemma}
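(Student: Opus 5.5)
The plan is to exploit the gradient-flow structure of \eqref{GF2}: along the flow $\frac{d}{dt}\length = D\length(X)\,\partial_t X = -\norm{\nabla\length_\lambda(X)}^2_{H^1_{\lambda,2}(X)}$. I will bound the $H^1_{\lambda,2}$ norm of the gradient from below by a multiple of $\length$, and then close with Gr\"onwall.

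\emph{Step 1 (differentiating length).} By Lemma~\ref{PRimmersed} the solution stays in $\wimm$, so $|X_u(u,t)|>0$ for a.e.\ $u$ and all $t$. Since $t\mapsto X_u(\cdot,t)$ is $C^1$ into $L^1$, I will show that $\ell(t)=\length(X(\cdot,t))$ is absolutely continuous with
\[
\ell'(t)=\int_0^1\frac{\ip{X_u,X_{ut}}}{|X_u|}\,du = D\length(X)\,F_\lambda(X)\quad\text{for a.e. }t.
\]
The justification is that the difference quotients of $|X_u|$ are dominated by $|X_{ut}|$ in $L^1$, and the pause set has measure zero. Then, by the defining relation \eqref{graddef} with $v=\nabla\length_\lambda(X)=-F_\lambda(X)$, I obtain $\ell'(t)=-\norm{\nabla\length_\lambda(X)}^2_{H^1_{\lambda,2}(X)}$.

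\emph{Step 2 (lower bound on the gradient norm).} This is the key step. For any admissible $v$, Cauchy--Schwarz in the $H^1_{\lambda,2}(\gamma)$ inner product gives
\[
\big(D\length(\gamma)v\big)^2\le \norm{\nabla\length_\lambda(\gamma)}^2\,\norm{v}^2 .
\]
I will test with $v=\gamma-\gamma(u_0)$ for a fixed $u_0$. In constant-speed parametrisation, $D\length(\gamma)v=\length^{-1}\int_0^1|\hat\gamma'|^2dx=\length$. Moreover $|\hat\gamma(x)-\hat\gamma(x_0)|\le \length/2$, since the shorter arc between the two points has length at most $\length/2$. Using the form of the metric computed in the proof of Lemma~\ref{gradientlemma} (with $a=2$), this gives
\[
\norm{v}^2=\frac{1}{\length}\int_0^1\big(|\hat\gamma-\hat\gamma(x_0)|^2+\lambda^2|\hat\gamma'|^2\big)dx\le \frac{1}{\length}\Big(\frac{\length^2}{4}+\lambda^2\length^2\Big)=\length\Big(\frac14+\lambda^2\Big).
\]
Therefore $\norm{\nabla\length_\lambda}^2\ge \frac{4}{1+4\lambda^2}\length\ge\frac{4}{1+8\lambda^2}\length$. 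Wirtinger's inequality would give an even better constant, but this cruder bound is enough for the stated rate.

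\emph{Step 3 (conclusion).} Combining Steps 1 and 2 gives $\ell'\le -\frac{4}{1+8\lambda^2}\ell$ a.e. Since $\ell$ is absolutely continuous and positive, Gr\"onwall yields \eqref{eq:ldecay}.

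I expect the main obstacle to be Step 1. One must justify differentiating $\length$ in the low-regularity $W^{1,1}$ setting, where $\length$ is only G\^ateaux differentiable and only at a.e.-immersed curves. One must also check that \eqref{graddef} may be applied with $v=F_\lambda(X)\in W^{1,1}$ rather than a smooth variation. For the latter I would revisit the integration by parts in Lemma~\ref{gradientlemma}: in constant-speed parametrisation the gradient lies in $W^{2,1}$, and the identity extends to $v\in W^{1,1}$ by density.
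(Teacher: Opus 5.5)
Your proof is correct, but it takes a different and sharper route than the paper.

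\textbf{The paper's route.} The paper computes $\length'$ explicitly in constant-speed parametrisation. Using $\partial_x^2(\hat X\ast\G)=-F_\lambda(\hat X)$ and an integration by parts, it obtains $\length'=-\length/\lambda^2-\frac{1}{\lambda^2\length}\int_0^1\ip{X-\bar X,\partial_tX}\,\hat du$. It then applies Young's inequality with $\varepsilon=2$ to the last term and absorbs it using $\length'=-\norm{\partial_tX}^2_{H^1_\lambda(X)}$. In that absorption only the zeroth-order part of the norm is used.

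\textbf{How it relates to yours.} If you unwind the paper's identity, it is exactly the relation $\ip{\nabla\length_\lambda(X),X-\bar X}_{H^1_{\lambda,2}(X)}=\length(X)$. This is the same fact you exploit with the test direction $\gamma-\gamma(u_0)$. The difference is that you apply Cauchy--Schwarz in the full $H^1_{\lambda,2}$ inner product. That is the optimal way to use the relation, and it gives the stronger rate $4/(1+4\lambda^2)$ with no absorption step. Your suggested refinement is also right: testing with $\gamma$ minus its arc-length mean and using Wirtinger gives $4\pi^2/(1+4\pi^2\lambda^2)$. This equals the exact decay rate $b_{\lambda,2}$ of the round circle from Section~\ref{sec:circle}, so your argument is sharp, whereas the paper's constant is not.

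\textbf{Step 1.} Your Step 1 is more careful than the paper, which simply asserts $\length'=D\length(X)\partial_tX=-\norm{\partial_tX}^2$. Your domination argument is valid: compare $\frac{|a+hD_h|-|a|}{h}$ with $\frac{|a+hb|-|a|}{h}$, whose difference is at most $|D_h-b|\to0$ in $L^1$, and then apply dominated convergence to the latter. In fact it gives differentiability of $\length$ at every $t$, not just almost every $t$.

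\textbf{One small inaccuracy.} In constant-speed parametrisation the gradient $\lambda^{-2}(\hat\gamma+\hat\gamma\ast\G)$ has only the regularity of $\hat\gamma$. It lies in $W^{1,\infty}$ but not in general in $W^{2,1}$; a polygon is a counterexample, since $\hat\gamma'$ jumps. This does not affect your argument. The density extension of \eqref{graddef} only needs the gradient, its derivative and $\hat\gamma'$ to be bounded. Alternatively, since both test directions you use are Lipschitz in constant-speed parametrisation, you can verify \eqref{graddef} directly for $\hat v\in H^1$ via Fourier series.
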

\begin{proof} By Lemma \ref{PRimmersed}, $X(\cdot,t)$ is immersed and therefore $\rho_{X(\cdot,t)}$ is invertible and $\length$ is differentiable at $X$.
Write $\hat X(x,t):=X(\rho_{X(\cdot,t)}^{-1}(x),t)$.
From \eqref{dlength}, and using \eqref{Finvariance}:
\[
\begin{aligned}
\mathcal{L}^{\prime}(t)  =D \mathcal{L}(X) \cdot \partial_t X
&=\frac{1}{\length(X)}\int_0^1\left\langle\hat{\partial}_u X, \partial_u \partial_t X\right\rangle du\\
& =-\frac{1}{\lambda^2\length(X)} \int_0^{1}\left\langle\partial_x \hat X, \partial_x \left(\hat X+\hat X * \mathcal G^\lambda \right)\right\rangle dx \\
& =-\frac{\mathcal{L}}{\lambda^2}-\frac{1}{\lambda^2\length(X)} \int_0^{1} \langle \partial_x \hat X,\partial_x (\hat{X}\ast \mathcal G^\lambda)\rangle dx .
\end{aligned}
\]
We define $\bar{X}:=\int_0^1 X du$, and note that ${\partial}_x(\hat X-\bar{X})=\partial_x \hat X$. Since
\[
\begin{aligned}
\partial_x^2(\hat X \ast \mathcal G^\lambda) & =\frac{1}{\lambda^2} \hat X\ast (\delta+\mathcal G^\lambda)  =- F_\lambda(\hat X),
\end{aligned}
\]
after an integration by parts and another application of \eqref{Finvariance} we obtain
\[
\begin{aligned}
\mathcal{L}^{\prime}(t) 
& =-\frac{\mathcal{L}}{\lambda^2}-\frac{1}{\lambda^2\length(X)} \int_0^1\left\langle X-\bar{X}, \partial_t X\right\rangle \hat{d}u .
\end{aligned}
\]
Then using $\abs{X-\bar X}\abs{\partial_tX}\leq \varepsilon\abs{X-\bar X}^2+\frac{1}{4\varepsilon}\abs{\partial_tX}^2$, and also    $\norm{X-\bar X}_{L^\infty}\leq \length/2$
\begin{align*}
\length'(t)
& \leq -\frac{\length}{\lambda^{2}}+\frac{\varepsilon \length }{4\lambda^{2}}+ \frac{1}{4\varepsilon \lambda^2\length^2}\int_0^1 \abs{\partial_t X}^2 \abs{\partial_u X} du.
\end{align*}
Recalling \eqref{metrics}, since $\length'(t)=-\norm{\partial_t X}_{H^1_\lambda (X)}^2$, it now follows that 
\[
\length'(t)
 \leq -\frac{\length}{\lambda^{2}}+\frac{\varepsilon \length }{4\lambda^{2}}- \frac{1}{4\varepsilon \lambda^2}\length'(t),
\]
hence
\[
\length'(t)\leq -\frac{1-\varepsilon/4}{1/(4\varepsilon)+\lambda^2}\length \]
and setting $\varepsilon=2$ and integrating gives the stated result.  
\end{proof}


The following lemma shows that solutions to \eqref{GF2} are equivariant even under reparametrisations that are not necessarily invertible. This will be used to extend the length decay estimate \eqref{lengthdecay}, via constant speed reparametrisation, to the general case where $X_0$ is not necessarily immersed.

\begin{lemma}\label{reparam}
Let $\phi : [0,1] \to [0,1]$ be absolutely continuous, non-decreasing and surjective.
If $\tilde{X}(x,t)$ is the solution to (GF2) with $\tilde{X}(\cdot, 0) = \tilde{X}_0$, then
$X(u,t) := \tilde{X}(\phi(u), t)$ is the solution with $X(\cdot, 0) = \tilde{X}_0 \circ \phi$.
\end{lemma}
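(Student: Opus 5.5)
The plan is to verify directly that $X(u,t):=\tilde X(\phi(u),t)$ solves \eqref{GF2}, and then invoke the uniqueness part of Proposition~\ref{globalexistence} (or Lemma~\ref{wellposedconstants} in the zero-length case) to conclude that it is \emph{the} solution. Regularity comes first. Since $\phi$ is absolutely continuous and monotone, composition $f\mapsto f\circ\phi$ maps $W^{1,1}(\S,\R^2)$ into itself, with $(f\circ\phi)'=(f'\circ\phi)\,\phi'$ a.e. and $\norm{(f\circ\phi)'}_{L^1}=\norm{f'}_{L^1}$ by the change of variables formula for monotone absolutely continuous maps. The operator is therefore a bounded linear map, with $\norm{f\circ\phi}_{W^{1,1}}\le\norm{f}_{W^{1,1}}$ in the norm used above. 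It follows that $X\in C^1([0,\infty),\wsr)$ with $\partial_tX(\cdot,t)=(\partial_t\tilde X)(\phi(\cdot),t)$. Also $X(\cdot,0)=\tilde X_0\circ\phi$.

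It remains to check the equation pointwise, i.e.\ the equivariance $F_\lambda(\gamma\circ\phi)=F_\lambda(\gamma)\circ\phi$ for every $\gamma\in\wsr$. First, $\length(\gamma\circ\phi)=\int_0^1|\gamma'(\phi(u))|\phi'(u)\,du=\length(\gamma)$, so the zero-length cases agree. Likewise $\sigma_{\gamma\circ\phi}(u)=\int_0^u|\gamma'(\phi)|\phi'=\sigma_\gamma(\phi(u))$. Here surjectivity and monotonicity give $\phi(0)=0$ (if $\phi(0)\neq0$ we would adjust by a rotation of $\S$, but non-decreasing and onto forces $\phi(0)=0,\ \phi(1)=1$). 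Hence $\rho_{\gamma\circ\phi}=\rho_\gamma\circ\phi$. Then in the convolution
\[
\int_0^1 \gamma(\phi(\upsilon))\,\G\big(\rho_\gamma(\phi(\upsilon))-\rho_\gamma(\phi(u))\big)\frac{|\gamma'(\phi(\upsilon))|\phi'(\upsilon)}{\length(\gamma)}\,d\upsilon
\]
we substitute $w=\phi(\upsilon)$, which is valid for absolutely continuous non-decreasing $\phi$ with an integrable integrand. This gives $(\gamma\ast_\gamma\G)(\phi(u))$. Alternatively, I would use \eqref{Finvariance}, which expresses $F_\lambda(\gamma)=F_\lambda(\hat\gamma)\circ\rho_\gamma$ with the constant speed reparametrisation: $\gamma\circ\phi$ and $\gamma$ share the same $\hat\gamma$, since $\hat\gamma$ is determined by $\gamma=\hat\gamma\circ\rho_\gamma$ and $\rho_\gamma$ surjective. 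Either route yields $F_\lambda(X(\cdot,t))=F_\lambda(\tilde X(\cdot,t))\circ\phi=\partial_t\tilde X(\phi(\cdot),t)=\partial_tX(\cdot,t)$.

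The main obstacle I expect is the justification of the change of variables and chain rule when $\phi$ has flat pieces and $\gamma'$ is only $L^1$. At points where $\phi'=0$ the composite derivative vanishes even though $\gamma'\circ\phi$ might be undefined, and $\gamma'\circ\phi$ need not be measurable a priori. I would handle this with the standard result that for $\phi$ monotone absolutely continuous and $g\in L^1$, $(g\circ\phi)\phi'$ is integrable with $\int_0^u (g\circ\phi)\phi'=\int_0^{\phi(u)}g$, interpreting the product as zero where $\phi'=0$. Applying this to $g=\gamma'$ gives the chain rule, to $g=|\gamma'|$ gives the arclength identity, and to the convolution integrand gives the last step.
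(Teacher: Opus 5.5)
Your proposal is correct and follows essentially the same route as the paper. Both arguments establish $\sigma_X=\sigma_{\tilde X}\circ\phi$, hence $\length(X)=\length(\tilde X)$ and $\rho_X=\rho_{\tilde X}\circ\phi$, and then substitute $w=\phi(\upsilon)$ in the convolution to get $F_\lambda(X)=F_\lambda(\tilde X)\circ\phi$.

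The extra points you supply are details the paper leaves implicit: the measure-theoretic change of variables, $C^1$-in-time regularity via the bounded composition operator, and the explicit appeal to uniqueness. One small correction: the bound $\norm{f\circ\phi}_{W^{1,1}}\le\norm{f}_{W^{1,1}}$ can fail for the norm $\norm{f}_{L^1}+\norm{f'}_{L^1}$ when $\phi$ has flat pieces. Boundedness with some constant still holds via $\norm{f\circ\phi}_{L^1}\le\norm{f}_{L^\infty}$, and that is all you use.
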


\begin{proof}
Note that
$$\sigma_X(u) = \int_0^u \left| \partial_\upsilon \tilde{X}(\phi(\upsilon ), t) \right| d\upsilon = \int_0^u \left| \partial_1 \tilde{X}(\phi(\upsilon ), t) \right| |\phi'(\upsilon )|\, d\upsilon $$
then since $\phi'(u) \geq 0$, substituting $\tau = \phi(\upsilon )$ gives $\sigma_X(u) = \sigma_{\tilde{X}}(\phi(u))$.
Hence $\mathcal{L}(X) = \mathcal{L}(\tilde{X})$ and $\rho_X(u) = \rho_{\tilde{X}}(\phi(u))$, and then since $\tilde X$ satisfies \eqref{GF2},
\begin{align*}
X_t(u,t) &= \tilde{X}_t(\phi(u), t) \\
&= -\frac{1}{\lambda^2} \left( \tilde{X}(\phi(u), t) + \int_0^1 \tilde{X}(\phi(v), t)\, \G\left( \rho_{\tilde{X}}(\phi(v)) - \rho_{\tilde{X}}(\phi(u)) \right) d \rho_{\tilde{X}}(\phi(v)) \right) \\
&= -\frac{1}{\lambda^2} \left( X(u,t) + \int_0^1 X(v,t)\, \G\left( \rho_X(v) - \rho_X(u) \right) d \rho_X(v) \right) \\
&= F_\lambda (X)(u,t)
\end{align*}
shows that $X$ also satisfies \eqref{GF2}, with $X(u,0)=\tilde X(\phi(u),0)$.
\end{proof}

\begin{corollary}\label{lengthdecayall}
    The estimate \eqref{eq:ldecay} extends to any solution $X\in C^1([0,\infty),\wsr)$ to \eqref{GF2}. 
\end{corollary}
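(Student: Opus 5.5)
The plan is to reduce to the immersed case through the constant-speed reparametrisation, using Lemma~\ref{reparam}. Let $X\in C^1([0,\infty),\wsr)$ solve \eqref{GF2} with $X(\cdot,0)=X_0$. If $\length(X_0)=0$, Lemma~\ref{wellposedconstants} makes $X$ stationary with zero length, so \eqref{eq:ldecay} holds trivially. Otherwise we set $\phi:=\rho_{X_0}$. It is absolutely continuous, non-decreasing and surjective onto $[0,1]$, since it is the normalised integral of $|X_0'|\in L^1$ with $\rho_{X_0}(0)=0$ and $\rho_{X_0}(1)=1$.

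We then take $\tilde X_0:=\hat X_0=X_0\circ\varphi_{X_0}$. By the discussion preceding Lemma~\ref{bounds} we have $X_0=\hat X_0\circ\rho_{X_0}$. By Appendix~\ref{appendixA}, $\hat X_0$ is continuous with $|\hat X_0'|=\length(X_0)>0$ a.e.; we need it in $\wsr$ and a.e.\ immersed, i.e.\ $\hat X_0\in\wimm$. Given that, $\hat X_0$ is Lipschitz (constant speed), hence in $W^{1,1}$.

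Let $\tilde X$ be the global solution of \eqref{GF2} with $\tilde X(\cdot,0)=\hat X_0$, given by Proposition~\ref{globalexistence}. By Lemma~\ref{reparam}, $\tilde X(\rho_{X_0}(u),t)$ solves \eqref{GF2} with initial datum $\hat X_0\circ\rho_{X_0}=X_0$. Uniqueness in Proposition~\ref{globalexistence} then gives $X(u,t)=\tilde X(\rho_{X_0}(u),t)$. The proof of Lemma~\ref{reparam} showed $\sigma_X(u)=\sigma_{\tilde X}(\phi(u))$, so $\length(X(\cdot,t))=\length(\tilde X(\cdot,t))$ for every $t$, and $\length(\tilde X(\cdot,0))=\length(X_0)$.

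Since $\hat X_0\in\wimm$, Lemma~\ref{PRimmersed} keeps $\tilde X$ immersed for all time. Lemma~\ref{lengthdecay} then gives $\length(\tilde X(\cdot,t))\le\length(X_0)e^{-4t/(1+8\lambda^2)}$, and this transfers to $X$ by the length identity above.

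The main obstacle is the regularity bookkeeping. First, we must confirm that $\hat X_0$ really lies in $\wimm$, which is the appendix fact. Second, Lemma~\ref{PRimmersed} and Lemma~\ref{lengthdecay} were stated on bounded intervals $[0,T_0]$ and with pointwise differentiation of $\length$. I would apply them on each finite interval $[0,T]$: the immersion lower bound \eqref{immersionbound} holds for all $t$, so $\length$ is differentiable along $\tilde X$ and the differential inequality can be integrated on $[0,T]$ for arbitrary $T$. A minor point is that Lemma~\ref{reparam} composes with a possibly non-invertible $\phi$; its proof used only monotonicity and the change of variables for absolutely continuous $\phi$, which is valid here.
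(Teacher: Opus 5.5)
Your proposal is correct and follows the paper's proof essentially verbatim. You dispose of the zero-length case with Lemma~\ref{wellposedconstants}, apply Lemma~\ref{reparam} with $\phi=\rho_{X_0}$ and $\tilde X_0=\hat X_0=X_0\circ\varphi_{X_0}$, transfer length through $\sigma_X=\sigma_{\tilde X}\circ\phi$, and invoke Lemma~\ref{lengthdecay} for the immersed solution $\tilde X$. Your explicit appeals to uniqueness in Proposition~\ref{globalexistence} and to Appendix~\ref{appendixA} for $\hat X_0\in\wimm$ just spell out steps the paper leaves implicit.
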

\begin{proof}
If $\length(X_0)=0$ then by Lemma \ref{wellposedconstants} \eqref{eq:ldecay} holds trivially.
Given $X_0\in \wsr$ with $\length(X_0)>0$, if we apply Lemma \ref{reparam} with $\phi=\rho_{X_0}$ and $\tilde X_0=\hat X_0=X_0\circ \varphi_{X_0}$, then we have that $X(u,t)=\tilde X(\rho_{X_0}(u),t)$ is the solution with $X(\cdot,0)=X_0$. It then follows that $\length(X(\cdot, t))=\length(\tilde X(\cdot,t)).$ Since $\hat X_0$ is immersed $\length(\tilde X(\cdot,t))$ satisfies \eqref{eq:ldecay}, and then so does $\length(X(\cdot, t))$.
\end{proof}

Having extended the length decay estimate to arbitrary initial data, we now use it to prove that solutions converge to constant maps as $t\to \infty$.

\begin{proposition}\label{longtimebehaviour}
Let 
$X\in C^1([0,\infty),W^{1,1}(\S,\R^2))$ be the solution to to \eqref{GF2} with $X(\cdot,0)=X_0\in W^{1,1}(\S,\R^2)$. Then as $t\to \infty$,
$
X(\cdot,t)
$
converges in $W^{1,1}(\S,\R^2)$, and hence uniformly, to a constant map $u\mapsto \x_\infty \in \R^2$ with ${\abs{\x_\infty}\leq \norm{X_0}_{L^\infty}}$. 
\end{proposition}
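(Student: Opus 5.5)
The plan is to show that $t\mapsto X(\cdot,t)$ is Cauchy in $W^{1,1}$ as $t\to\infty$, by integrating the velocity bound against the exponential length decay. Then we identify the limit as a constant and bound it with Lemma~\ref{apriori}.

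If $\length(X_0)=0$ then, by Lemma~\ref{wellposedconstants}, $X$ is stationary and constant, and there is nothing to prove. Otherwise, Lemma~\ref{bounds} gives
\[
\norm{\partial_t X(\cdot,t)}_{W^{1,1}}=\norm{F_\lambda(X(\cdot,t))}_{W^{1,1}}\le \tfrac{5}{2\lambda^2}\length(X(\cdot,t)).
\]
Combining this with Corollary~\ref{lengthdecayall} yields $\norm{\partial_t X}_{W^{1,1}}\le \tfrac{5}{2\lambda^2}\length_0 e^{-4t/(1+8\lambda^2)}$. This bound is integrable on $[0,\infty)$. Hence for $t_1<t_2$,
\[
\norm{X(\cdot,t_2)-X(\cdot,t_1)}_{W^{1,1}}\le\int_{t_1}^{t_2}\norm{\partial_tX}_{W^{1,1}}\,dt\le \tfrac{5(1+8\lambda^2)}{8\lambda^2}\length_0e^{-4t_1/(1+8\lambda^2)},
\]
which tends to $0$ as $t_1\to\infty$. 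This uses the Bochner integral representation $X(t_2)-X(t_1)=\int_{t_1}^{t_2}DX\,dt$, valid since $X\in C^1$. By completeness of $W^{1,1}(\S,\R^2)$ we obtain a limit $X_\infty$, with an explicit exponential rate. Uniform convergence follows from the embedding $\norm{f}_{L^\infty}\le \norm{f}_{W^{1,1}}$ on $\S$, which holds since $|f(u)|\le |f(v)|+\norm{f'}_{L^1}$ and we can average over $v$.

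To see that $X_\infty$ is constant, use that $\length$ is Lipschitz on $W^{1,1}$ by \eqref{lip2}. This gives $\length(X_\infty)=\lim_{t\to\infty}\length(X(\cdot,t))=0$ by \eqref{eq:ldecay}, so $X_\infty'=0$ a.e. and $X_\infty\equiv \x_\infty$. For the bound on $\x_\infty$, Lemma~\ref{apriori} says $\norm{X(\cdot,t)}_{L^\infty}$ is non-increasing. It applies on any interval $[0,T]$, since the global solution of Proposition~\ref{globalexistence} restricts there. Passing to the uniform limit gives $|\x_\infty|=\norm{X_\infty}_{L^\infty}\le\norm{X_0}_{L^\infty}$.

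The main obstacle is only bookkeeping: making sure the decay estimate of Corollary~\ref{lengthdecayall} is available for non-immersed data, which the paper has already provided. I therefore expect no genuinely hard step.
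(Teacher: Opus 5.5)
Your proposal is correct and follows the paper's proof essentially step for step. The velocity bound of Lemma~\ref{bounds} combined with the exponential length decay of Corollary~\ref{lengthdecayall} makes $X(\cdot,t)$ Cauchy in $W^{1,1}$, continuity of $\length$ forces the limit to be constant, and Lemma~\ref{apriori} together with $W^{1,1}\hookrightarrow C^0$ gives $|\x_\infty|\le\norm{X_0}_{L^\infty}$; your separate zero-length case, explicit rate and one-line proof of the embedding are harmless additions.
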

\begin{proof}
By Lemma \ref{bounds} and Corollary \ref{lengthdecayall}
\[
\norm{X(\cdot,t_2)-X(\cdot,t_1)}_{W^{1,1}}\leq \Big\Vert \int_{t_1}^{t_2}\partial_t X(\cdot,t)dt\Big\Vert_{W^{1,1}}\leq \frac{5}{2\lambda^2}\length(X(\cdot,0))\int_{t_1}^{t_2}e^{-4t/(1+8\lambda^2)}dt.
\]
Hence $X(\cdot,t_j)$ is Cauchy in $W^{1,1}$ for any sequence of times $t_j\to\infty$, and there exists $X_\infty$ such that
$X(\cdot,t)\to X_\infty$
in $W^{1,1}(\S,\R^2)$
as $t\to\infty.$
By continuity of $\length$ and \eqref{eq:ldecay},
\[
\length(X_\infty)=\lim_{t\to\infty}\length(X(\cdot,t))=0,
\]
and therefore $X_\infty$ is a constant map, say $X_\infty(u)\equiv \x_\infty$. Since $W^{1,1}(\S,\R^2)$ embeds continuously into $C^0(\S,\R^2)$, the convergence is also uniform. Finally, Lemma~\ref{apriori} gives
$
\norm{X(\cdot,t)}_{L^\infty}\le \norm{X_0}_{L^\infty}
$
for all $t\ge0$, and passing to the uniform limit yields
$|\x_\infty|\le \norm{X_0}_{L^\infty}.$

\end{proof}

Since $F_{\lambda,a}=\length^{a-2}F_\lambda$, a time reparametrisation with derivative $\length^{2-a}$  allows us to set up a correspondence between solutions to \eqref{GF} and \eqref{GF2}, and hence extend the above convergence results to the general case. 

\begin{proposition}\label{timescaling}
Fix $a\in\R$, $\lambda>0$ and $X_0\in W^{1,1}(\S,\R^2)$ with $\length(X_0)>0$. The maximal solutions $X\in C^1([0,T_a),\wsr)$ and $Y\in C^1([0,\infty),\wsr)$ to \eqref{GF} and \eqref{GF2} with $X(\cdot,0)=Y(\cdot,0)=X_0$ differ by a time reparametrisation:
\[
Y(\cdot,t)=X(\cdot,\theta(t)),
\]
where $\theta:[0,\infty)\to[0,T_a)$ is defined by $\theta(t):=\int_0^t \length(Y(\cdot, \tau))^{2-a}\,d\tau$.
\end{proposition}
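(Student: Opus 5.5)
The idea is to build a solution of \eqref{GF} from the global solution $Y$ of \eqref{GF2} by a time change, and then invoke uniqueness. Let $Y$ be the global solution from Proposition~\ref{globalexistence}, and set $\ell(t):=\length(Y(\cdot,t))$.

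First I will record the properties of $\ell$ and $\theta$. Since $\length$ is Lipschitz on $W^{1,1}$ by \eqref{lip2} and $Y\in C^1([0,\infty),W^{1,1})$, the map $\ell$ is continuous. By \eqref{lengthlowerbound} it satisfies $\ell(t)\geq \ell(0)e^{-2t/\lambda^2}>0$. Hence $\ell^{2-a}$ is continuous and strictly positive. Therefore $\theta(t)=\int_0^t\ell^{2-a}$ is $C^1$ and strictly increasing, with $\theta'=\ell^{2-a}>0$. It is thus a $C^1$ diffeomorphism onto $[0,T^*)$, where $T^*:=\int_0^\infty \ell^{2-a}\,d\tau\in(0,\infty]$, and its inverse $\psi:=\theta^{-1}$ is $C^1$.

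Next I will define $X(\cdot,s):=Y(\cdot,\psi(s))$ for $s\in[0,T^*)$. By the chain rule for Banach-valued $C^1$ maps, $X\in C^1([0,T^*),W^{1,1})$ and
\[
\partial_s X=\psi'(s)\,F_\lambda(Y(\psi(s)))=\length(X(\cdot,s))^{a-2}F_\lambda(X(\cdot,s))=F_{\lambda,a}(X(\cdot,s)).
\]
Here I use $\psi'=1/\theta'(\psi)=\ell(\psi)^{a-2}$, and that $\length(X)>0$ so the first branch of \eqref{GF} applies. So $X$ is a positive-length solution of \eqref{GF} with $X(0)=X_0$, and $Y(\cdot,t)=X(\cdot,\theta(t))$ holds by construction.

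It remains to show that this $X$ is the maximal solution, i.e.\ $T^*=T_a$, and this is the main obstacle. Uniqueness on common intervals follows from Proposition~\ref{existence}. Indeed, any positive-length solution $\tilde X$ of \eqref{GF} has locally Lipschitz right-hand side (Corollary~\ref{Fa-bounds}), and the set of times where it agrees with $X$ is closed by continuity and open by local uniqueness, so $\tilde X=X$ on the common interval. Maximality then means $X$ cannot be extended as a positive-length solution beyond $T^*$. If $T^*=\infty$ this is automatic. If $T^*<\infty$, then the interval of $s$-values is bounded while $t=\psi(s)\to\infty$ as $s\uparrow T^*$. By Corollary~\ref{lengthdecayall}, $\length(X(\cdot,s))=\ell(\psi(s))\to 0$, so no positive-length continuation past $T^*$ exists, since any continuation would be continuous in $W^{1,1}$ and have length tending to zero at $T^*$. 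Conversely, for any solution of \eqref{GF} on $[0,T)$, the inverse time change $t=\int_0^s\length(X)^{a-2}$ produces a solution of \eqref{GF2}, which agrees with $Y$ by uniqueness. This shows that the maximal interval for \eqref{GF} is exactly $\theta([0,\infty))$, so $\theta$ maps onto $[0,T_a)$.

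Finally, for the finiteness claimed in Theorem~\ref{thm:main1}, I will note that when $a<2$ the decay \eqref{eq:ldecay} gives
\[
T_a=\int_0^\infty\ell^{2-a}\leq \length_0^{2-a}\int_0^\infty e^{-4(2-a)t/(1+8\lambda^2)}\,dt,
\]
which is exactly the stated bound. When $a\geq 2$, the lower bound \eqref{lengthlowerbound} together with $\ell\leq\ell(0)$ gives $\ell^{2-a}\geq \ell(0)^{2-a}$, so the integral diverges and $T_a=\infty$.
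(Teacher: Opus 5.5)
Your proposal is correct and follows essentially the same route as the paper. Like the paper, you set $\bar X(\cdot,s)=Y(\cdot,\theta^{-1}(s))$, verify by the chain rule that it solves \eqref{GF}, and identify it with the maximal positive-length solution using uniqueness together with $\length(\bar X(\cdot,s))\to 0$ as $s\uparrow T$ when $a<2$. Your inverse time-change argument and your use of $\ell\le\ell(0)$ (rather than exponential decay) to obtain $T_a=\infty$ for $a\ge 2$ are minor variations that work equally well.
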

\begin{proof}
Set $\ell(t):=\length(Y(\cdot,t))$
and define
\[
\theta(t):=\int_0^t \ell(\tau)^{2-a}\,d\tau,\qquad t\geq 0,
\]
then $\theta\in C^1([0,\infty),[0,T))$ with $\theta(0)=0$. By \eqref{lengthlowerbound} $\ell(t)>0$ and then $\theta'(t)=\ell(t)^{2-a}>0$, hence $\theta$ is strictly increasing. Recalling \eqref{lengthdecay}, 
$\ell(t)\leq \ell_0e^{-\beta t}$ where $ \ell_0:=\ell(0)$ and $ \beta:=4/(1+8\lambda^2)$, and it follows that $\theta$ maps onto $[0,T)$ where 
\[
T:=\int_0^\infty \ell(s)^{2-a}\,ds \,
\begin{cases}
=\infty & \text{if } a\geq 2,\\
\leq \ell_0^{2-a}/(\beta(2-a))<\infty & \text{if }a<2.
\end{cases}
\]

Set $\bar X(u,t):=Y(u,\theta^{-1}(t))$ for $(u,t)\in\S\times[0,T)$, then since $Y$ satisfies \eqref{GF2}
\[
\partial_t \bar X(\cdot,t)
=\partial_2 Y(\cdot,\theta^{-1}(t))\,(\theta^{-1})'(t)
=F_\lambda(\bar X(\cdot,t))\frac{1}{\ell(\theta^{-1}(t))^{2-a}}
=\length(\bar X(\cdot,t))^{a-2}F_\lambda(\bar X(\cdot,t)),
\]
i.e. $\bar X$ satisfies \eqref{GF} on $[0,T)$ with $\bar X(\cdot,0)=Y(\cdot,0)=X_0$. By uniqueness of solutions to \eqref{GF} (Proposition~\ref{existence} together with the standard continuation argument), $\bar X$ and the maximal solution $X$ agree on $[0,\min(T,T_a))$. If $a\geq 2$, $T=\infty$ and maximality of $X$ forces $T_a=\infty$. If $a<2$, $\length(\bar X(\cdot,t))=\ell(\theta^{-1}(t))\to 0$ as $t\to T^-$, so $\bar X$ cannot be extended as a solution to \eqref{GF} with positive length; the same applies to $X$, giving $T_a=T$. In either case $T=T_a$ and $\bar X=X$ on $[0,T_a)$, and so 
$Y(\cdot,t)=X(\cdot,\theta(t))$ for all $t\geq 0.$
\end{proof}

\begin{remark}
For $a<2$, the bound on the extinction time $T$ has the same dependence on $\ell_0:=\length(X_0)$ and $(2-a)$ as the exact extinction time of the round circle solution computed in Section~\ref{sec:circle}: 
\[
t_{\rm ext}=\frac{(1+(2\pi\lambda)^2)\,\ell_0^{2-a}}{(2-a)\,(2\pi)^{2}}.
\]
Hence the powers $\ell_0^{2-a}$ and $(2-a)^{-1}$ are sharp.
\end{remark}

Theorem \ref{thm:main1} now follows by combining Proposition~\ref{longtimebehaviour} and Proposition~\ref{timescaling}.

\section{Preservation of convexity}
\label{sec:convex}

In this section we show that the flow preserves strict convexity, 
provided the initial datum is $C^2$ and strictly convex. We first prove that if the initial curve is $C^1$ and evolves by \eqref{GF2} then this regularity persists. We then upgrade this to preservation of $C^2$ regularity, and derive an evolution equation for curvature $k$ which, via a Gr\"onwall inequality, ensures that positivity of $k$ is also preserved along the flow. 

\begin{lemma}\label{C1Lipschitz}
For each $X_0\in C^{1}(\S,\R^2)$ there exists a unique $X\in C^1([0,\infty),C^{1}(\S,\R^2))$ such that $X(0)=X_0$ and $X$ satisfies \eqref{GF2}. 
\end{lemma}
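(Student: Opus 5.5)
The plan is to show that the $W^{1,1}$ solution from Proposition~\ref{globalexistence} stays in $C^1$ and is $C^1$ in time there, rather than rerunning Picard--Lindel\"of from scratch. Constant $X_0$ is trivial by Lemma~\ref{wellposedconstants}, so assume $\length(X_0)>0$. Let $X\in C^1([0,\infty),\wsr)$ be the unique $W^{1,1}$ solution. Since $C^1(\S,\R^2)\hookrightarrow\wsr$ continuously, any $C^1$-valued solution is also a $W^{1,1}$ solution, so uniqueness in the stated class follows from Proposition~\ref{globalexistence}. Only existence needs work, namely that $X(t)\in C^1$ and $t\mapsto X(t)$ is $C^1$ into $C^1(\S,\R^2)$.

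\textbf{Step 1: a Lipschitz estimate in $C^1$.} I will show that $F_\lambda$ maps $C^1$ into $C^1$ and is locally Lipschitz in the $C^1$ norm away from constants. From \eqref{fullFdash},
\[
(F_\lambda(\gamma))'(u)=-\frac{1}{\lambda^2}\Big(\gamma'(u)+\frac{\abs{\gamma'(u)}}{\length(\gamma)}\int_0^1\gamma'(\upsilon)\G(\rho_\gamma(\upsilon)-\rho_\gamma(u))\,d\upsilon\Big).
\]
For $\gamma\in C^1$ the first term is continuous, and so is $\abs{\gamma'}$. The integral is a continuous function of $u$: $\G$ is continuous and Lipschitz on $\S$ (its derivative is bounded by \eqref{Gprime}-type formulas), $\rho_\gamma$ is continuous, and dominated convergence applies. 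Hence $F_\lambda(\gamma)\in C^1$.

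For the Lipschitz bound in sup norm, I will handle each factor separately. By \eqref{lip1}, $\abs{\rho_\gamma-\rho_\beta}\le c\norm{\gamma-\beta}_{W^{1,1}}\le c\norm{\gamma-\beta}_{C^1}$ near $\gamma_0$. Also $\big|\abs{\gamma'}-\abs{\beta'}\big|\le\abs{\gamma'-\beta'}$, and $\length$ is Lipschitz. The integral changes by at most
\[
\norm{\gamma'-\beta'}_{L^1}\norm{\G}_\infty+\norm{\beta'}_{L^1}\,\mathrm{Lip}(\G)\,\norm{\rho_\gamma-\rho_\beta}_\infty .
\]
Combining these through the telescoping product estimate gives $\norm{F_\lambda(\gamma)'-F_\lambda(\beta)'}_{C^0}\le c\norm{\gamma-\beta}_{C^1}$. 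The $C^0$ part of $F_\lambda$ is handled the same way using \eqref{eq:Fmod}.

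\textbf{Step 2: local solution and identification.} Picard--Lindel\"of on the Banach space $C^1(\S,\R^2)$ gives a local $C^1$-valued solution. By the uniqueness argument above it coincides with the $W^{1,1}$ solution $X$.

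\textbf{Step 3: global continuation.} This needs a priori $C^1$ bounds, i.e.\ that $\norm{X}_{C^1}$ stays finite and $\length$ stays positive on finite intervals. Positivity of length is \eqref{lengthlowerbound}. Lemma~\ref{apriori} gives $\norm{X}_{L^\infty}$ non-increasing. For the derivative, the pointwise computation in Lemma~\ref{apriori} shows $\abs{X_u(u,t)}$ is non-increasing for every $u$. Since everything is now continuous, it gives this everywhere, not merely a.e. So $\norm{X_u}_{C^0}\le\norm{X_0'}_{C^0}$.

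By Step~1 the speed $\norm{F_\lambda(X)}_{C^1}$ is then bounded on bounded time intervals: it is at most $\frac{1}{2\lambda^2}\length+\frac{2}{\lambda^2}\norm{X_u}_\infty$, using \eqref{Fdashest} pointwise. Hence $X(t)$ is Cauchy in $C^1$ as $t\uparrow T_{\max}$, and the limit has positive length. Restarting Picard--Lindel\"of at the limit contradicts maximality, exactly as in Proposition~\ref{globalexistence}, so $T_{\max}=\infty$.

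\textbf{Main obstacle.} I expect the hard part to be the uniform-in-$u$ Lipschitz control in Step~1. The map $u\mapsto\abs{\gamma'(u)}/\length(\gamma)$ multiplies an integral whose kernel depends on $\gamma$ through $\rho_\gamma$, and for non-immersed $C^1$ curves $\rho_\gamma$ need not be invertible. The estimate must therefore avoid the hatted parametrisation and work directly with the formula above. This should suffice, since only continuity and Lipschitz-ness of $\G$ and Lipschitz dependence of $\rho$ are used.
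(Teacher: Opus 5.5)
Your proof is correct and follows the paper's route: you carry the $W^{1,1}$ Lipschitz estimates over to the $C^1$ norm, apply Picard--Lindel\"of in $C^1(\S,\R^2)$, and continue globally by ruling out $C^1$ blow-up. The differences are minor. You take uniqueness from the $W^{1,1}$ theory and treat constant $X_0$ explicitly. You bound $\norm{X_u}_{L^\infty}$ using the pointwise monotonicity of $\abs{X_u}$ shown in the proof of Lemma~\ref{apriori}, which gives the sharper $\norm{X_u(\cdot,t)}_{L^\infty}\le\norm{X_0'}_{L^\infty}$, whereas the paper applies Gr\"onwall to \eqref{Fdashest} and obtains the exponential bound \eqref{C1bound}.
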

\begin{proof}
From \eqref{Fest1} and \eqref{Fdashest} we have that $F_\lambda$ is locally bounded in $C^1$. The $W^{1,1}$ norm in \eqref{lip1} can be replaced by a $C^1$ norm, likewise the subsequent Lipschitz estimates in the proof of Lemma \ref{lipschitz}; we conclude that $F_\lambda$ is also locally Lipschitz in $C^1$. Short-time existence and uniqueness in $C^1$ then follow from the Picard-Lindel\"of theorem as in Proposition \ref{existence}.

To extend the local $C^1$ solution globally it suffices (by the standard continuation criterion for ODEs in Banach spaces, cf.\ \cite{Zeidler:1986aa}) to exclude blow-up of the $C^1$ norm in finite time.
The $L^\infty$-bound in Lemma~\ref{apriori} controls $\|X(\cdot,t)\|_{L^\infty}$, so here we need only estimate $\|X_u(\cdot,t)\|_{L^\infty}$.
By \eqref{Fdashest} we have $|\partial_t X_u(u,t)|\le \frac{2}{\lambda^2}|X_u(u,t)|$, and then Gr\"onwall's inequality yields
\begin{equation}\label{C1bound}
\|X_u(\cdot,t)\|_{L^\infty}\le \|X_u(\cdot,0)\|_{L^\infty}\,e^{2t/\lambda^2}\qquad\text{for all }t\ge0.
\end{equation}
In particular, $\sup_{t\in[0,T]}\|X(\cdot,t)\|_{C^1}<\infty$ for each $T<\infty$, so the maximal $C^1$ solution extends to all $t\ge0$.
\end{proof}

\begin{lemma}
For each immersion $X_0\in C^{2}(\S,\R^2)$ there exists a unique $X\in C^1([0,\infty),C^{2}(\S,\R^2))$ such that $X(0)=X_0$ and $X$ satisfies \eqref{GF2}. 
\end{lemma}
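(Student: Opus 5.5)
The plan is to mirror the proof of Lemma~\ref{C1Lipschitz}. First obtain short-time existence and uniqueness in $C^2$ near an immersion via the Picard--Lindel\"of theorem. Then exclude blow-up of the $C^2$ norm in finite time using pointwise Gr\"onwall estimates. These estimates will use three facts already available: the $L^\infty$ bound of Lemma~\ref{apriori}, the $C^1$ bound \eqref{C1bound}, and the lower bound \eqref{immersionbound} on $|X_u|$ from Lemma~\ref{PRimmersed}. Throughout, $c$ denotes a generic constant depending on $\lambda$, $\length(X_0)$ and the fixed time horizon.

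\textbf{Step 1: a formula and pointwise bound for $F_\lambda''$.} For a $C^2$ immersion $\gamma$, differentiate \eqref{fullFdash} once more in $u$. Use $\partial_u|\gamma'(u)| = \langle\gamma',\gamma''\rangle/|\gamma'|$, which is where the immersion assumption enters, and $\partial_u \G(\rho_\gamma(\upsilon)-\rho_\gamma(u)) = -(\G)'(\rho_\gamma(\upsilon)-\rho_\gamma(u))\,|\gamma'(u)|/\length(\gamma)$. This gives
\[
F_\lambda(\gamma)''(u)=-\frac{1}{\lambda^2}\Big(\gamma''+\frac{\langle\gamma',\gamma''\rangle}{|\gamma'|\length}\int_0^1\gamma'(\upsilon)\G(\rho_\gamma(\upsilon)-\rho_\gamma(u))\,d\upsilon-\frac{|\gamma'|^2}{\length^2}\int_0^1\gamma'(\upsilon)(\G)'(\rho_\gamma(\upsilon)-\rho_\gamma(u))\,d\upsilon\Big).
\]
The first integral is bounded exactly as in the proof of Lemma~\ref{bounds}. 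For the second integral, $(\G)'$ is bounded, with $\sup|(\G)'|=\frac{1}{2\lambda^2}$, so it is at most $\sup|(\G)'|\,\length$. Hence
\[
|F_\lambda(\gamma)''(u)|\le \frac{2}{\lambda^2}|\gamma''(u)|+\frac{1}{2\lambda^4\length(\gamma)}|\gamma'(u)|^2 .
\]
In particular $F_\lambda$ is locally bounded in $C^2$ on the set of $C^2$ immersions, which is open in $C^2$.

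\textbf{Step 2: local Lipschitz continuity in $C^2$.} Fix a $C^2$ immersion $\gamma_0$. On a small $C^2$ ball around $\gamma_0$, $|\gamma'|$ is bounded below uniformly, so $1/|\gamma'|$ and $\langle\gamma',\gamma''\rangle/|\gamma'|$ are Lipschitz in the $C^2$ norm. Lipschitz dependence of $\rho_\gamma$ and $\length$ on $\gamma$ is available from \eqref{lip1} and \eqref{lip2}, and the telescoping-product argument of Lemma~\ref{lipschitz} then handles all terms except the one containing $(\G)'$.

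I expect this $(\G)'$ term to be the main obstacle. Since $(\G)'$ has a jump at $0$, it is not Lipschitz pointwise, so one cannot just insert Lipschitz bounds for $\rho$. My first attempt is to exploit that the jump only matters on a small set. Split
\[
(\G)'(\rho_\gamma(\upsilon)-\rho_\gamma(u))-(\G)'(\rho_\beta(\upsilon)-\rho_\beta(u)).
\]
Away from the $\upsilon$ where the two arguments have different signs, $(\G)'$ is smooth, so this part is Lipschitz in $\norm{\gamma-\beta}_{C^1}$. The exceptional $\upsilon$ form a set of measure $O(\norm{\rho_\gamma-\rho_\beta}_{L^\infty}/\inf|\rho_\gamma'|)$, and $\inf|\rho_\gamma'|$ is bounded below by the immersion lower bound. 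This gives a Lipschitz bound. Alternatively, I would change variables to $\tau=\rho_\gamma(\upsilon)$ and integrate by parts back to $\int \hat\gamma''(\tau)\,\G(\tau-x)\,d\tau$, whose kernel $\G$ is Lipschitz. With this in hand, Picard--Lindel\"of gives a unique local $C^1([0,T_0],C^2)$ solution, which by uniqueness in $W^{1,1}$ coincides with the global solution of Proposition~\ref{globalexistence}.

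\textbf{Step 3: global continuation.} Fix $T<\infty$. On $[0,T]$, \eqref{C1bound} bounds $|X_u|$ above, \eqref{immersionbound} bounds it below by $\inf|X_0'|\,e^{-2T/\lambda^2}>0$ (so the solution stays in the region where Step 2 applies), and $\length(X)$ is bounded below by \eqref{lengthlowerbound}. Applying Step 1 to $\partial_t X_{uu}=F_\lambda(X)''$ then gives
\[
|\partial_t X_{uu}|\le \frac{2}{\lambda^2}|X_{uu}|+c
\]
on $[0,T]$. Gr\"onwall yields a finite bound on $\norm{X_{uu}}_{L^\infty}$ on $[0,T]$, so the $C^2$ norm does not blow up in finite time. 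The continuation criterion then gives $X\in C^1([0,\infty),C^2(\S,\R^2))$.
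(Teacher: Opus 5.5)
Your proposal is correct and follows essentially the same route as the paper. You differentiate \eqref{fullFdash} to obtain \eqref{fullFdashdash} and bound $F_\lambda''$ linearly in $\gamma''$. You handle the $(\G)'$ term in the $C^2$ Lipschitz estimate by integrating by parts, which is exactly what the paper does. You then exclude $C^2$ blow-up by a Gr\"onwall argument using \eqref{C1bound} and \eqref{lengthlowerbound}.

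The differences are minor. You bound the $(\G)'$ term with $\sup|(\G)'|=\frac{1}{2\lambda^2}$ rather than the $L^1$ bound \eqref{Gprime}. You use a pointwise-in-$u$ Gr\"onwall argument rather than Danskin's theorem. You also state explicitly, via \eqref{immersionbound}, that the solution stays in the immersion region where the $C^2$ Lipschitz estimate holds, which the paper leaves implicit.
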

\begin{proof}
Differentiating \eqref{fullFdash}:
\begin{align}
-\lambda^2 F_\lambda(\gamma)''(u)
\label{fullFdashdash}
&=  \begin{multlined}[t]\gamma''(u)+\frac{\ip{\gamma''(u),\gamma'(u)}}{\length(\gamma)\abs{\gamma'(u)}} \int_0^1  \phat \gamma(\upsilon) \mathcal{G}^\lambda (\rho_\gamma(\upsilon)-\rho_\gamma(u)) \hat d \upsilon \\
-\frac{\abs{\gamma'(u)}^2}{\length(\gamma)^2} \int_0^1  \phat \gamma(\upsilon)  (\G)'\big (\rho_\gamma(\upsilon)-\rho_\gamma(u)\big) \hat d \upsilon ,
\end{multlined}
\end{align}
and using \eqref{Gprime}, we estimate
\begin{equation}\label{C2bound}
\norm{F_\lambda(\gamma)''}_{L^\infty} \leq \frac{2}{\lambda^2}\norm{\gamma''}_{L^\infty}+\frac{\tanh(\tfrac{1}{4\lambda})}{\lambda^3 \length(\gamma)}\norm{\gamma'}^2_{L^\infty}.
\end{equation}
Assuming $\gamma_0$ is an immersion, we let $\varepsilon <\frac{1}{2}\min_{u\in\S}|\gamma_0'(u)|=:m_0$ so that, by a similar argument to \eqref{lip2}, $\min_{u\in\S}|\gamma'(u)|\geq m_0>0$ for all $\gamma$ in the open $C^2$-ball $B_\varepsilon^{C^2}(\gamma_0)$. Then $1/|\gamma'|$, $\rho$ and $1/\length$ are Lipschitz on $B_\varepsilon^{C^2}(\gamma_0)$. After integrating by parts in the third summand of \eqref{fullFdashdash}, then expressing it as a suitable telescoping sum of products and using term-by-term Lipschitz estimates we have
\[
\norm{F_\lambda (\gamma)''-F_\lambda(\beta)''}_{L^\infty} \leq c \norm{\gamma-\beta}_{C^2}
\]
for any $\gamma,\beta \in B_\varepsilon^{C^2}(\gamma_0)$, where $c$ is a constant. Combining the above inequality with the $C^1$ Lipschitz estimate explained in the proof of Lemma \ref{C1Lipschitz} yields the $C^2$ local-Lipschitz property required by the Picard--Lindel\"of theorem. Moreover, the $C^1$ bound from Lemma \ref{C1Lipschitz} combines with \eqref{C2bound} to give a $C^2$ bound, and so \eqref{GF2} is well-posed as an evolution in $C^2(\S,\R^2)$. 

As in Lemma \ref{C1Lipschitz}, in order to extend the solution $X(\cdot, t)$ indefinitely it suffices to exclude blow-up of the $C^2$-norm in finite time. Indeed by Lemma \ref{C1Lipschitz}, $\norm{X(\cdot,t)}_{C^1}$ is controlled, so it remains to bound $\norm{X_{uu}(\cdot,t)}_{L^\infty}$. Suppose $X\in C^1([0,T],C^{2}(\S,\R^2))$ is a solution and denote $\ell(t)=\length(X(\cdot, t))$, then \eqref{C2bound}, \eqref{lengthlowerbound} and \eqref{C1bound} yield
\begin{align*}
\norm{\partial_t X_{uu}(\cdot, t)}_{L^\infty}\leq  \frac{2}{\lambda^2}\norm{X_{uu}(\cdot, t)}_{L^\infty}+ b_\lambda e^{6t/\lambda^2}, \quad \text{where } b_\lambda:=\frac{\tanh(\tfrac{1}{4\lambda})\norm{X_u(\cdot,0)}_{L^\infty}^2}{\lambda^3 \ell(0)}
\end{align*}
Then with $V(t)=\norm{X_{uu}(\cdot, t)}_{L^\infty}$,for any $t,t'\in [0,T]$
\begin{align*}
\abs{V(t')-V(t)}  \leq \sup_{u\in \S}\abs{X_{uu}(u,t')-X_{uu}(u,t)}
&\leq \sup_{u\in \S}\int_t^{t'}\abs{\partial_\tau X_{uu}(u, \tau)}d\tau\\
& \leq \abs{t'-t}\max_{t\in [0,T]}\left (\frac{2}{\lambda^2}V(t)+b_\lambda e^{6t/\lambda^2}\right )
\end{align*}
 and therefore $V$ is absolutely continuous. Using Danskin's theorem again, with $U^*:=\{ u\in \S: \abs{X_{uu}(u,t)}=V(t)\}$
\begin{align*}
V'(t)=\max_{u^*\in U^*}\partial_t\abs{X_{uu}(u^*,t)} \leq \max_{u^*\in U^*}\abs{\partial_t X_{uu}(u^*,t)} \leq 
\max_{u\in \S }\abs{\partial_t X_{uu}(u,t)}\leq \frac{2}{\lambda^2}V(t)+b_\lambda e^{6t/\lambda^2} \text{ a.e. }
\end{align*}
Now a Gr\"onwall-type inequality yields a bound on the growth of $V$.
\end{proof}

\begin{proposition}
Assume that $X_0\in C^2(\S,\R^2)$ is an immersion and that $X_0(\S)$ is the boundary of a strictly convex set in $\R^2$.
Then $X(\S,t)$, where $X\in C^1([0,\infty),C^2(\S,\R^2))$ is the solution to $\eqref{GF2}$ with initial data $X_0$, is the boundary of a strictly convex set for all $t$. 
\end{proposition}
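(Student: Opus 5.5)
The plan is to show that the signed curvature $k=\langle X_{uu},N\rangle/|X_u|^2$ stays strictly positive, with $N=JX_u/|X_u|$ the inward unit normal. For a $C^2$ immersion with winding number one, $k>0$ everywhere forces $X(\S,t)$ to bound a strictly convex set. By Lemma \ref{PRimmersed} and the two preceding lemmas, $X(\cdot,t)$ is a $C^2$ immersion for all $t$, with $|X_u|$ bounded below by \eqref{immersionbound}, so $k(\cdot,t)$ is well defined and continuous in $(u,t)$. The turning number of $X(\cdot,t)$ is continuous in $t$ and integer valued, so it stays equal to $1$. Hence it suffices to prove $\min_u k(u,t)>0$ for all $t$.

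To compute $\partial_t k$, I would work in the constant speed parametrisation $\hat X$ at each time, using \eqref{Finvariance}. There $\hat X'=\L T$ and $\hat X''=\L^2 kN$. The velocity is $F_\lambda(\hat X)=-\lambda^{-2}(\hat X+\hat X\ast\G)$. By \eqref{GIP}, $F'=-\lambda^{-2}(\hat X'+\hat X'\ast \G)$, and differentiating once more via \eqref{delta2} gives
\[
F''=-\lambda^{-2}\big(\hat X''+\hat X''\ast\G\big).
\]
Writing $\partial_t k$ in terms of the normal and tangential components of $F'$ and $F''$, I expect
\[
\partial_t k = \frac{\langle F'',N\rangle}{|X_u|^2}\Big|_{\text{const. speed}} - 2k\,\frac{\langle F',T\rangle}{\L}+\text{(terms from the reparametrisation)}.
\]
The key structural point is the sign. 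We have $\hat X''\ast\G=\L^2\int k(\tau)N(\tau)\G(\tau-x)\,d\tau$, and $\G<0$. So the nonlocal normal part is
\[
-\lambda^{-2}\L^2\int k(\tau)\langle N(\tau),N(x)\rangle\G(\tau-x)\,d\tau .
\]
This is nonnegative wherever the nearby normals have positive inner product with $N(x)$, but it can be negative in general. I would therefore not rely on positivity. Instead I would bound the nonlocal terms crudely: $|\hat X''\ast\G|\le\L^2\|k\|_{L^\infty}$ by \eqref{curlygintegral}, and $|\langle F',T\rangle|\le 2\L/\lambda^2$ by \eqref{Fdashest}. The aim is a differential inequality at a minimum point $u^*$ of $k$, of the form
\[
\partial_t k(u^*,t)\ge -C(t)\,k(u^*,t)-D(t),
\]
where the awkward term $D$ must in fact vanish when $k(u^*)=0$.

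The main obstacle is exactly this point: showing that the normal component of the nonlocal forcing at a point of zero curvature is nonnegative, or at least is controlled by $\min k$ rather than by $\|k\|_\infty$. My first attempt would use the integral formula for the normal component,
\[
-\L^2\int k(\tau)\G(\tau-x)\cos(\theta(\tau)-\theta(x))\,d\tau ,
\]
where $\theta$ is the tangent angle, together with the fact that while the curve is convex, $\theta$ is monotone of total variation $2\pi$. Then $k\,d\tau=d\theta/\L$, and the integral becomes
\[
-\L\int\G(\tau(\theta)-x)\cos(\theta-\theta_x)\,d\theta .
\]
I would then try to exploit that $\G$ is symmetric and decreasing in $|\tau-x|$ on $[0,1/2]$, and $\cos$ is symmetric and decreasing in $|\theta-\theta_x|$, via a rearrangement argument, to show this quantity is positive. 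Combined with the local term $\hat X''$, which contributes $-\lambda^{-2}k$, this yields $\partial_t k\ge -C k$ at the minimum. A continuity argument started at the first time $\min k$ hits $0$, followed by Gr\"onwall, then gives $\min_u k(\cdot,t)\ge \min_u k(\cdot,0)e^{-\int_0^t C}>0$.

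Finally, I would transfer the result from \eqref{GF2} to \eqref{GF} via Proposition \ref{timescaling}, since a time reparametrisation does not change the images $X(\S,t)$.
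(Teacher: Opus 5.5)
Your framework is the paper's. You reduce to keeping $k>0$, since turning number $1$ then gives strict convexity. You use $\partial_t k=\mathcal L^{-2}\langle F'',N\rangle-2k\,\mathcal L^{-1}\langle F',T\rangle$; this is exact at a fixed material point $u$, so no extra reparametrisation terms appear. You control the tangential term via $|\langle T\ast_X\mathcal G^\lambda,T\rangle|\le 1$, and close with Gr\"onwall plus continuity. You also correctly locate the crux: the nonlocal normal forcing $-\lambda^{-2}\mathcal L^{-2}\langle(\hat X''\ast\mathcal G^\lambda)(x),N(x)\rangle$ must be nonnegative. However, the mechanism you propose for this sign cannot work, so the proof has a genuine gap exactly at this step.

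A rearrangement argument would use only three facts: $-\mathcal G^\lambda$ and $\cos$ are even and decreasing, and $\theta$ is nondecreasing with total turning $2\pi$. All three also hold for non-closed locally convex arcs, where the inequality is false. For example, take $x=0$ and $\theta_x=0$. Let $\theta\equiv 0$ on $(0,\tfrac12)$, make a quick turn from $0$ to $\pi/2$ near $\tau=\tfrac12$, keep $\theta\equiv\pi/2$ after that, and make a quick turn from $\pi/2$ to $2\pi$ just before $\tau=1$. Then $k(x)=0$, but
\[
-\mathcal L\int \mathcal G^\lambda(\tau(\theta)-x)\cos(\theta-\theta_x)\,d\theta\approx \mathcal L\big(\mathcal G^\lambda(0)-\mathcal G^\lambda(\tfrac12)\big)<0 .
\]
This quantity depends only on $\hat X'$, which is periodic here, while $\int_0^1\hat X'\,d\tau\approx\tfrac{\mathcal L}{2}(1,1)\neq 0$. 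So closedness of the curve must enter the argument, and a rearrangement argument never uses it.

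The missing idea is the paper's. Integrate by parts twice on the circle, which is legitimate precisely because $\hat X$ is periodic. Then use $\lambda^2(\mathcal G^\lambda)''=\mathcal G^\lambda+\delta$ and $\int_0^1\mathcal G^\lambda=-1$ to obtain
\[
\langle(\hat X''\ast\mathcal G^\lambda)(x),N(x)\rangle=\frac{1}{\lambda^2}\int_0^1\langle \hat X(\tau)-\hat X(x),N(x)\rangle\,\mathcal G^\lambda(\tau-x)\,d\tau .
\]
This is negative, because a strictly convex curve lies strictly on the inward side of its tangent line and $\mathcal G^\lambda<0$. It gives $\partial_t k\ge -k/\lambda^2$ at every $u$, not only at a minimum point, and the rest of your argument then goes through.
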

\begin{proof} 
Since $X_0(\S)$ is the boundary of a strictly convex set and $X_0\in C^2$, we have $k(u,0)>0$ for all $u\in \S$. We will estimate the time evolution of curvature in order to prove that it remains positive. Using the notation $\partial_s=\frac{1}{\abs{X_u}}\partial_u$, $T=\partial_s X$ and $kN=\partial_s T$, and the commutation relation:
\[
\partial_t\partial_s = \partial_s\partial_t -\langle \partial_s \partial_tX,T \rangle \partial_s
\]
we compute 
\begin{align*}
\partial_t (kN) &=\partial_s \partial_t T-\ip{\partial_s\partial_t X,T} kN\\
&= \partial_s \big ( \partial_s \partial_t X-\ip{\partial_s \partial_t X,T }T\big )-\ip{\partial_s\partial_t X,T} kN\\
&=\partial_s^2 \partial_t X
-\ip {\partial_s^2\partial_t X,T} T
-\langle \partial_s \partial_tX,kN\rangle T
-2\langle \partial_s \partial_t X,T \rangle kN,
\end{align*}
and then 
\[
\partial_t (k^2)=2\langle \partial_t(kN),kN\rangle=2\langle \partial_s^2\partial_t X,kN\rangle - 4k^2\langle \partial_s \partial_t X,T\rangle .
\]
From \eqref{fullFdash} we calculate 
\begin{align*}
    \partial_s \partial_t X & =-\frac{1}{\lambda^2}\left (T+T\ast_X\mathcal G^\lambda\right)\\
    \partial_s^2 \partial_tX&=-\frac{1}{\lambda^2}\left (kN+\frac{1}{\length(X)^2}X\ast_X(\phat^2 \mathcal G^\lambda) \right)=-\frac{1}{\lambda^2}\left (kN+\frac{1}{\lambda^2\length(X)^2}\left (X+X\ast_X\mathcal G^\lambda \right)\right),
\end{align*}
and it follows that 
\[
\partial_t(k^2)=-\frac{2}{\lambda^2}k^2-\frac{2}{\lambda^4\length(X)^2}\langle X+X\ast_X\mathcal G^\lambda,kN\rangle +\frac{4k^2}{\lambda^2}\left (1+\langle T\ast_X\mathcal G^\lambda ,T\rangle \right).
\]
By continuity there exists $t_*>0$ such that $X(\cdot,t)$ remains the boundary of a strictly convex set while $t\in [0,t_*)$. Hence for $t\in [0,t_*)$
we have $k>0$ and
\begin{align}\label{curvatureevo}
\partial_t k&=\frac{k}{\lambda^2}+\frac{2k}{\lambda^2}\langle T\ast_X\mathcal G^\lambda,T\rangle-\frac{1}{\lambda^4\length(X)^2}\langle X+X\ast_X\mathcal G^\lambda,N\rangle
\end{align}
Using \eqref{curlygintegral},
\[
\langle X+X\ast_X\mathcal G^\lambda,N\rangle=\int\langle X(v)-X(u),N(u)\rangle \mathcal{G}^\lambda(\rho_X(u)-\rho_X(v))\hat dv
\]
and then because $\mathcal G<0 $, and the convexity of $X(\S,t)$ gives $\langle X(v)-X(u),N(u)\rangle>0$, we have $\langle X+X\ast_X\mathcal G^\lambda,N\rangle<0 $. Now, with $|\langle T\ast_X\mathcal G^\lambda,T\rangle|\leq 1$, \eqref{curvatureevo} yields
\[
\partial_t k \geq -\frac{k}{\lambda^2},
\]
and therefore $k(u,t)\geq k(u,0)e^{-t/\lambda^2}$ for all $t\in[0,t_*)$ and all $u\in \S$. 
By the continuity of $k$ with respect to $t$, this inequality 
implies $k(\cdot,t)>0$ for all $t\ge0$.
\end{proof}

Theorem \ref{thm:main2} now follows by combining the preceding proposition with the correspondence between solutions of \eqref{GF2} and \eqref{GF} of Proposition \ref{timescaling}.

\appendix

\section{}\label{appendixA}
We prove the claim from Section~\ref{sec:existence} that the constant speed reparametrisation $\hat \gamma(x):=\gamma\circ \varphi_\gamma(x)$ is absolutely continuous with $|\hat \gamma'(x)|=\length(\gamma)$ a.e. The proof is adapted from Lemma 1.1.4 in \cite{AmbrosioGigliSavare2008}, which treats the case of arc-length reparametrisation. 

Note that for any $0\leq x_1\leq x_2\leq 1$, $u_i=\varphi_\gamma(x_i)$, we have
\begin{equation*}
\abs{\hat\gamma(x_1)-\hat\gamma(x_2)}=\abs{\gamma(u_1)-\gamma(u_2)}\leq \int_{u_1}^{u_2}\abs{\gamma'(u)}du=\length(\gamma)(x_2-x_1),
\end{equation*}
showing that $\hat\gamma$ is Lipschitz, hence differentiable a.e., and it follows that  $\abs{\hat\gamma'(x)}\leq \length(\gamma)$. To obtain a lower bound for $\abs{\hat\gamma'(x)}$, consider $0<u_1<u_2<1$ with $x_i=\rho_\gamma(u_i)$, then since $\gamma=\hat\gamma\circ \rho_\gamma$
\[
\abs{\gamma(u_1)-\gamma(u_2)}=\abs{\hat\gamma(x_1)-\hat\gamma(x_2)}\leq \int_{x_1}^{x_2}\abs{\hat\gamma'(x)}dx=\frac{1}{\length(\gamma)}\int_{u_1}^{u_2}\abs{\hat\gamma'(\rho_\gamma(u))}\abs{\gamma'(u)}du.
\]
Dividing each side by $|u_1-u_2|$, taking a limit and applying the Lebesgue differentiation theorem, it follows that 
\[
\length(\gamma) \abs{\gamma'(u)}\leq \abs{\hat \gamma'(\rho_\gamma(u))}\abs{\gamma'(u)} \quad \text{for a.e. } u\in[0,1]
\]
and then $\length(\gamma) \leq \abs{\hat\gamma'(\rho_\gamma(u))}$ a.e. in $[0,1]\setminus P$ where $P:=\{ u\in [0,1]: |\gamma'(u)|=0\}$. Since $\rho_\gamma$ is absolutely continuous it maps null sets to null sets. Moreover, since $\rho_\gamma$ is also monotone, with  $\rho_\gamma'(u)=\abs{\gamma'(u)}/\length(\gamma)$, we have $|\rho_\gamma(P)|\leq \int_P\rho_\gamma'(u)du=0 $. Hence $\length(\gamma)\leq \abs{\hat\gamma'(x)}$ a.e. $x\in [0,1]$, which combined with the upper bound obtained above yields $\abs{\hat\gamma'(x)}=\length(\gamma)$ almost everywhere.

\bibliographystyle{plain}
\bibliography{gradient_flows}

@book{hytonen2016analysis,
  title={Analysis in Banach spaces},
  author={Hyt{\"o}nen, Tuomas and Van Neerven, Jan and Veraar, Mark and Weis, Lutz},
  volume={1},
  year={2016},
  publisher={Springer}
}

@article{danskin1966theory,
  title={The theory of max-min, with applications},
  author={Danskin, John M},
  journal={SIAM Journal on Applied Mathematics},
  volume={14},
  number={4},
  pages={641--664},
  year={1966},
  publisher={SIAM}
}

@book{AmbrosioGigliSavare2008,
  title     = {Gradient Flows: In Metric Spaces and in the Space of Probability Measures},
  author    = {Ambrosio, Luigi and Gigli, Nicola and Savar{\'e}, Giuseppe},
  edition   = {2nd},
  series    = {Lectures in Mathematics ETH Z{\"u}rich},
  publisher = {Birkh{\"a}user},
  address   = {Basel},
  year      = {2008},
  isbn      = {978-3-7643-8721-1}
}

@article{schrader2023h,
	author = {Schrader, Philip and Wheeler, Glen and Wheeler, Valentina-Mira},
	journal = {The Journal of Geometric Analysis},
	number = {9},
	pages = {297},
	publisher = {Springer},
	title = {On the ${H}^1 (ds^\gamma)$-Gradient Flow for the Length Functional},
	volume = {33},
	year = {2023}}

@book{Zeidler:1986aa,
	author = {Zeidler, Eberhard},
	doi = {10.1007/978-1-4612-4838-5},
	isbn = {0-387-90914-1},
	mrclass = {47Hxx (46-02 46N05 54H25 55M20 58-02)},
	mrnumber = {816732},
	mrreviewer = {Jean Mawhin},
	note = {Fixed-point theorems, Translated from the German by Peter R. Wadsack},
	pages = {xxi+897},
	publisher = {Springer-Verlag, New York},
	title = {Nonlinear functional analysis and its applications. {I}},
	url = {https://mathscinet.ams.org/mathscinet-getitem?mr=816732},
	year = {1986}}

@article{A,
	author = {Angenent, Sigurd},
	doi = {10.2307/1971426},
	fjournal = {Annals of Mathematics. Second Series},
	issn = {0003-486X},
	journal = {Ann. of Math. (2)},
	mrclass = {35K15 (35B45 53C42 58E99)},
	mrnumber = {1078266},
	mrreviewer = {Yong-Geun Oh},
	number = {3},
	pages = {451--483},
	title = {Parabolic equations for curves on surfaces. {I}. {C}urves with {$p$}-integrable curvature},
	url = {https://mathscinet.ams.org/mathscinet-getitem?mr=1078266},
	volume = {132},
	year = {1990}}

@article{GH,
	author = {Gage, Michael and Hamilton, Richard S and others},
	journal = {Journal of Differential Geometry},
	number = {1},
	pages = {69--96},
	publisher = {Lehigh University},
	title = {The heat equation shrinking convex plane curves},
	volume = {23},
	year = {1986}}

@article{Gray,
	author = {Grayson, Matthew A},
	journal = {Annals of Mathematics},
	number = {1},
	pages = {71--111},
	publisher = {JSTOR},
	title = {Shortening embedded curves},
	volume = {129},
	year = {1989}}

@article{Gage:1983aa,
	author = {Gage, Michael E.},
	doi = {10.1215/S0012-7094-83-05052-4},
	fjournal = {Duke Mathematical Journal},
	issn = {0012-7094},
	journal = {Duke Math. J.},
	mrclass = {52A40 (53A04)},
	mrnumber = {726325},
	mrreviewer = {R. Osserman},
	number = {4},
	pages = {1225--1229},
	title = {An isoperimetric inequality with applications to curve shortening},
	url = {https://mathscinet.ams.org/mathscinet-getitem?mr=726325},
	volume = {50},
	year = {1983}}

@book{Andrews:2020aa,
	author = {Andrews, Ben and Chow, Bennett and Guenther, Christine Marie and Langford, Mathew},
	publisher = {American Mathematical Society},
	title = {Extrinsic Geometric Flows},
	year = {2020}}

@article{bauer2024elastic,
  title={Elastic metrics on spaces of euclidean curves: Theory and algorithms},
  author={Bauer, Martin and Charon, Nicolas and Klassen, Eric and Kurtek, Sebastian and Needham, Tom and Pierron, Thomas},
  journal={Journal of Nonlinear Science},
  volume={34},
  number={3},
  pages={56},
  year={2024},
  publisher={Springer}
}

@article{MR2201275,
	author = {Michor, Peter W. and Mumford, David},
	doi = {10.4171/JEMS/37},
	fjournal = {Journal of the European Mathematical Society (JEMS)},
	issn = {1435-9855},
	journal = {J. Eur. Math. Soc. (JEMS)},
	mrclass = {58B20 (58D15)},
	mrnumber = {2201275},
	mrreviewer = {Nikolai K. Smolentsev},
	number = {1},
	pages = {1--48},
	title = {Riemannian geometries on spaces of plane curves},
	url = {https://doi.org/10.4171/JEMS/37},
	volume = {8},
	year = {2006}}

@article{bauer2011sobolev,
  title={SOBOLEV METRICS ON SHAPE SPACE OF SURFACES},
  author={Bauer, Martin and Harms, Philipp and Michor, Peter W},
  journal={Journal of Geometric Mechanics},
  volume={3},
  number={4},
  pages={389--438},
  year={2011}
}

\end{document}